\documentclass[preprint,11pt]{elsarticle}
\usepackage[T1]{fontenc}
\usepackage{booktabs}
\usepackage{}
\usepackage{amsmath,amssymb,amsthm}
\usepackage{enumitem}
\usepackage{graphicx}
\usepackage{color}
\usepackage{soul}
\usepackage{color, xcolor}
\usepackage[
  colorlinks=true,
  linkcolor=blue,
  citecolor=green,
  urlcolor=red
]{hyperref}
\usepackage[all]{xy}
\makeatletter
\def\ps@pprintTitle{%
  \let\@oddhead\@empty
  \let\@evenhead\@empty
  \def\@oddfoot{\reset@font\hfil\thepage\hfil}
  \let\@evenfoot\@oddfoot}
\makeatother

\newcommand{\R}{\mathbb{R}}
\newcommand{\Q}{\mathbb Q}

\newcommand{\N}{\mathbb{N}}
\newcommand{\Z}{\mathbb{Z}}

\newcommand{\SG}{\mathbb{SG}}
\newcommand{\QG}{\mathbb{QG}}
\newcommand{\FCSG}{\mathbb{FCSG}}
\newcommand{\FCQG}{\mathbb{FCQG}}
\newcommand{\SCSG}{\mathbb{SCSG}}
\newcommand{\SCQG}{\mathbb{SCQG}}

\DeclareMathOperator{\Reg}{Reg}
\DeclareMathOperator{\Tych}{Tych}
\newcommand{\nw}{\operatorname{nw}}
\newcommand{\cl}{\operatorname{cl}}
\newcommand{\Int}{\operatorname{Int}}

\newcommand{\eps}{\varepsilon}

\newtheorem{defin}{Definition}[section]

\newtheorem{theorem}[defin]{Theorem}
\newtheorem{lemma}[defin]{Lemma}
\newtheorem{proposition}[defin]{Proposition}
\newtheorem{corollary}[defin]{Corollary}
\newtheorem{remark}[defin]{Remark}
\newtheorem{qu}{Question}

\newenvironment{question}{\begin{qu}\rm}{\end{qu}}

\begin{document}

\begin{frontmatter}

\title{Answer some Tkachenko's questions in factorizable semitopological groups}

\author{Li-Hong Xie}
\ead{yunli198282@126.com}


\address{School of Mathematics and Computational Science, Wuyi University, Jiangmen, Guangdong, 529000, P.R. China}

\begin{abstract}
We construct a Hausdorff quasitopological abelian group that is
second-countable and semiregular, with a bounded continuous function
that does not factor
through any regular first-countable semitopological group. Its regular
and Tychonoff reflections coincide, have a countable network, and fail
even factorizability through arbitrary first-countable semitopological
groups. This gives negative answers to \cite[Problems~6.1, 6.2, 6.4, and~6.5]{Tk2026}
of Tkachenko. The construction starts with a countable cone group whose
common separation reflection is not Fr\'echet--Urysohn, answering
\cite[Problems~6.8 and~6.9]{Tk2026} negatively. 
In contrast, symmetrization gives affirmative answers to \cite[Problems~6.6
and~6.7]{Tk2026} without separation assumptions. We also show that every
countable semitopological group admits simultaneous factorizations of
countable families through second-countable Tychonoff semitopological groups.
\end{abstract}

\begin{keyword}
Semitopological group, quasitopological group, regular reflection,
Tychonoff reflection, first countability, factorization

\noindent
\textbf{2020 Mathematics Subject Classification.}
Primary 22A05, 54H11; Secondary 54A25, 54B30, 54C30, 54D55
\end{keyword}

\end{frontmatter}

\section{Introduction}
The notion of an $\R$-factorizable topological group is a classical topic in
topological algebra. A topological group $G$ is called \emph{$\R$-factorizable}
if every continuous real-valued function $f:G\to\R$ admits a factorization $ f=h\circ p,$
where $p:G\to H$ is a continuous surjective homomorphism onto a
second-countable Hausdorff topological group $H$ and $h:H\to\R$ is continuous.
This concept, together with its variants such as $m$-factorizability, is
systematically developed in \cite{AT}. In recent years, factorization
properties have been extensively studied beyond the category of topological
groups. When multiplication is only separately continuous, the choice of the
category of homomorphic images becomes essential, and several non-equivalent
notions of $\R$-factorizability may arise.

A \emph{semitopological group} is a group whose left and right translations
are continuous; a \emph{quasitopological group} is a semitopological group with
continuous inversion; and a \emph{paratopological group} is a group whose
multiplication is jointly continuous. No separation axiom is assumed in these
definitions. Peng and Wang \cite{PW} studied
$\mathcal{PR}$ ($\mathcal{P}m$)-factorizable semitopological groups,
established internal characterizations of certain continuous homomorphic
images, and investigated the preservation of factorization properties under
open and closed homomorphisms. To compare the resulting factorization properties in a uniform
way, Tkachenko \cite{Tk2026} introduced the notation
$(\mathbb X,\mathbb O)$-factorizability, where $\mathbb X$ is a class
of target spaces and $\mathbb O$ is a class of topological-algebraic objects. A group $G$ with a topology is \emph{$(\mathbb X,\mathbb O)$-factorizability} \cite[Definition 1.1]{Tk2026} if for every continuous mapping $f : G \to
X$ to a space $X \in \mathbb X$, there exists a continuous surjective homomorphism $p: G \to H$, where $H\in \mathbb O$,
such that $f = h \circ p$, for some continuous mapping $h : H \to X$. Using this framework, he
proved the absoluteness of $\R$-factorizability and $\mathcal M$-factorizability
in the categories of topological, paratopological, quasitopological, and
semitopological groups; see \cite[Theorems 4.13, 4.16, 4.17, 4.22, 4.23,
4.27]{Tk2026}.
In \cite[Section 6]{Tk2026}, Tkachenko posed a series of open problems
concerning the behaviour of factorization properties under separation
reflections and the relationship between factorization in different
categories. We use $\FCSG$ and $\SCSG$ for the
classes of first-countable and second-countable semitopological groups,
respectively; replacing $\SG$ by $\QG$ specifies quasitopological groups.
A subscript $r$ or $t$ specifies regular or Tychonoff members of a class,
respectively. Throughout this
paper, \emph{regular} means regular and $T_1$. The regular reflection
$\Reg(G)$ and the Tychonoff reflection $\Tych(G)$ are used in the sense
of~\cite{Tk2014}. Their canonical maps are continuous surjective
homomorphisms and have a universal property for \emph{all continuous maps}
to regular or Tychonoff spaces, respectively. This includes real-valued
functions, not only homomorphisms. 

 The questions solved in the present paper are the following.

\begin{question}\cite[Problem 6.1]{Tk2026}\label{Q1}
Let $G$ be an $(\R,\SCSG)$-factorizable semitopological group. Is the regular reflection
    $\Reg(G)$ or the Tychonoff reflection $\Tych(G)$ of $G$ again
    $(\R,\SCSG)$-factorizable?
\end{question}

\begin{question}\cite[Problem 6.2]{Tk2026}\label{Q2}
 Let $G$ be an
    $(\R,\FCSG)$-factorizable semitopological group. Is $\Reg(G)$ or
    $\Tych(G)$ again $(\R,\FCSG)$-factorizable?
\end{question}

\begin{question}\cite[Problem 6.4]{Tk2026}\label{Q3}
 Let $G$ be an
    $(\R,\SCSG)$-factorizable semitopological group. Is $G$ necessarily
    $(\R,\SCSG_r)$-factorizable?
\end{question}

\begin{question}\cite[Problem 6.5]{Tk2026}\label{Q4}
Does the
    $(\R,\FCSG)$-factorizability of a semitopological group $G$ imply its
    $(\R,\FCSG_r)$-factorizability or even its
    $(\R,\FCSG_t)$-factorizability?
\end{question}

\begin{question}\cite[Problem 6.6]{Tk2026}\label{Q5}
Does the
    $(\R,\FCSG)$-factorizability of a quasitopological group $G$ imply its
    $(\R,\FCQG)$-factorizability? What if $G$ is regular?
\end{question}

\begin{question}\cite[Problem 6.7]{Tk2026}\label{Q6}
Does the
    $(\R,\SCSG)$-factorizability of a quasitopological group $G$ imply its
    $(\R,\SCQG)$-factorizability? What if $G$ is regular?
\end{question}

\begin{question}\cite[Problem 6.8]{Tk2026}\label{Q7}
Let $G$ be a second-countable
    semitopological (or quasitopological) group. Is the regular reflection
    $\Reg(G)$ or the Tychonoff reflection $\Tych(G)$ of $G$ second-countable?
\end{question}

\begin{question}\cite[Problem 6.9]{Tk2026}\label{Q8}
Let $G$ be a first-countable
    semitopological (or quasitopological) group. Is $\Reg(G)$ or $\Tych(G)$
    first-countable?
\end{question}

In this paper we answer the above Questions as follows.

In Section \ref{sec2}, Questions \ref{Q7} and \ref{Q8} are answered negatively. We construct a
countable, second-countable, Hausdorff, semiregular quasitopological abelian
group $G$ such that its regular reflection $\Reg(G)$ and Tychonoff reflection
$\Tych(G)$ coincide with a countable, zero-dimensional quasitopological group
that is not Fr\'echet--Urysohn (see Theorem \ref{thm:main-example}). Consequently, neither first countability nor
second countability is preserved by the regular or Tychonoff reflection, even
for countable Hausdorff quasitopological abelian groups. The construction is
based on a rational version of a topology on $\R^2$; the additional conclusion
about the separation reflections is proved by using continuous functions
supported on small Euclidean balls outside a fixed cone. In Section \ref{sec2}, Questions 6.1, 6.2, 6.4, and 6.5 are answered negatively. We construct
a Hausdorff quasitopological abelian group $H$, which is second-countable and
semiregular, and $(\R,\SCSG)$-factorizable but not
$(\R,\FCSG_r)$-factorizable, whereas neither $\Reg(H)$ nor $\Tych(H)$ is
$(\R,\FCSG)$-factorizable (see Theorem \ref{thm:main-factorization}).
This gives negative answers to Questions \ref{Q1}, \ref{Q2}, \ref{Q3}, and \ref{Q4}.

In Section \ref{sec4}, first, we prove that for every quasitopological group $G$, one has
\begin{align*}
 G\text{ is }(\R,\FCSG)\text{-factorizable}
 &\ \Longleftrightarrow\
 G\text{ is }(\R,\FCQG)\text{-factorizable},\\
 G\text{ is }(\R,\SCSG)\text{-factorizable}
 &\ \Longleftrightarrow\
 G\text{ is }(\R,\SCQG)\text{-factorizable}.
\end{align*} Thus, Questions \ref{Q5} and \ref{Q6} are answered affirmatively, without any complete
regularity assumption on the quasitopological group $G$. The proof uses the
symmetrization construction already considered in \cite[Section 3]{Tk2026}. Next, we show that every countable semitopological group admits
simultaneous factorizations of countable families through second-countable
Tychonoff semitopological groups (see Proposition \ref{prop:countable-orbits}). Applying this result, we obtain that every countable semitopological group is
$(\R,\SCSG_t)$-factorizable and every countable quasitopological group
is $(\R,\SCQG_t)$-factorizable. The same assertions hold for
continuous maps to arbitrary separable metrizable spaces (see Theorem \ref{cor:countable}).

No separation axiom is assumed for any space in this paper unless explicitly stated; for definitions not given here, we refer the reader to \cite{AT,En}. The symbols $\mathbb{R}$, $\mathbb{Q}$, and $\mathbb{N}$ denote the real numbers with the Euclidean topology, the rational numbers, and the positive integers, respectively. For a topological space $X$, $\chi(X)$ denotes its character, $w(X)$ its weight, and $nw(X)$ its network weight. Also, $C(X,Y)$ denotes the set of all continuous functions from $X$ to $Y$.
\section{A countable quasitopological group with non-first-countable reflections}\label{sec2}
We first specify the Tychonoff reflection used in the construction.

\begin{lemma}\label{lem:weak-topology}
Let $(H,\tau)$ be a semitopological group whose continuous real-valued
functions separate points. Let $\tau_c$ be the initial topology on $H$
generated by $C((H,\tau),\R)$. Then $(H,\tau_c)$ is the Tychonoff reflection
of $(H,\tau)$, with the identity as its canonical homomorphism. It is
quasitopological whenever $(H,\tau)$ is quasitopological. If $H$ is
Lindel\"of, $(H,\tau_c)$ is also its regular reflection.
\end{lemma}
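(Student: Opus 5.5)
I will first check that $(H,\tau_c)$ is a Tychonoff semitopological group, then verify the universal property, then handle inversion, and finally treat the Lindelöf case.

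\emph{$(H,\tau_c)$ is a Tychonoff semitopological group.} The topology $\tau_c$ is the initial topology of a family of real-valued maps. Hence it is completely regular. By hypothesis these maps separate points, so $\tau_c$ is also $T_1$, and therefore Tychonoff. Moreover $\tau_c\subseteq\tau$, so the identity $(H,\tau)\to(H,\tau_c)$ is a continuous bijective homomorphism.

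For a left translation $\lambda_a$, continuity with respect to $\tau_c$ follows from the initial-topology criterion. It suffices that $f\circ\lambda_a$ be $\tau_c$-continuous for every $f\in C((H,\tau),\R)$. This holds because $f\circ\lambda_a$ is itself $\tau$-continuous, and so it is one of the generating functions. Right translations are handled the same way. If inversion is $\tau$-continuous, then $f\circ(\cdot)^{-1}\in C((H,\tau),\R)$, and the same argument makes inversion $\tau_c$-continuous. This gives the quasitopological clause.

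\emph{Universal property.} Let $g\colon(H,\tau)\to Y$ be continuous with $Y$ Tychonoff. Embed $Y$ into a product $\R^\kappa$. Each coordinate of $g$ is then a $\tau$-continuous real function, hence $\tau_c$-continuous. Therefore $g$ is $\tau_c$-continuous, and $g$ factors through the identity map.

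I should also check that this matches the sense of \cite{Tk2014}. Any other Tychonoff semitopological group image $p\colon(H,\tau)\to K$ satisfies the universal property for $p$ itself. So $p$ factors through $\mathrm{id}\colon(H,\tau)\to(H,\tau_c)$, and $(H,\tau_c)$ is the finest Tychonoff quotient. This is exactly the characterization of $\Tych(H)$.

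\emph{The Lindelöf case.} Here I expect the main obstacle. Let $\sigma$ be the regular reflection topology. Since $(H,\tau_c)$ is regular, the universal property of $\Reg$ gives $\tau_c\subseteq\sigma\subseteq\tau$, where the second inclusion holds because the canonical map is the identity, as points are already separated. For the reverse inclusion $\sigma\subseteq\tau_c$, I would use the fact that $(H,\sigma)$ is a continuous image of the Lindelöf space $(H,\tau)$, so it is Lindelöf. A regular Lindelöf space is paracompact, hence normal, hence Tychonoff (Urysohn). So $(H,\sigma)$ is a Tychonoff image and factors through $\Tych(H)=(H,\tau_c)$. This gives $\sigma\subseteq\tau_c$, and hence equality.

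The only care needed is that "Lindelöf" refers to $(H,\tau)$, and that Lindelöfness passes to continuous images. Both points are standard.
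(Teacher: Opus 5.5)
Your proof is correct. The first three parts match the paper: complete regularity of the initial topology, continuity of translations and inversion through the initial-topology criterion, and the universal property obtained by embedding the Tychonoff target into a power of $\R$.

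The only real difference is in the Lindel\"of case.

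\textbf{The paper's route.} The paper never looks at the topology of $\Reg(H)$. Given any continuous $f\colon(H,\tau)\to X$ with $X$ regular, it notes that $f(H)$ is regular and Lindel\"of, hence normal and Tychonoff. It then applies the Tychonoff universal property to the corestriction. So $(H,\tau_c)$ satisfies the defining property of the regular reflection directly.

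\textbf{Your route.} You take the existence of $\Reg(H)$ for granted. Using $(H,\tau_c)$ as a regular target, you see that the canonical map is injective and obtain $\tau_c\subseteq\sigma\subseteq\tau$. You then apply the same fact (regular plus Lindel\"of implies Tychonoff) to $(H,\sigma)$ itself, which gives $\sigma\subseteq\tau_c$. Your phrase ``points are already separated'' is terse. The precise reason for injectivity is that the identity $(H,\tau)\to(H,\tau_c)$ factors through the canonical map.

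\textbf{Comparison.} Both arguments rest on the same key fact. The paper's version is slightly more economical and self-contained, since it needs neither the existence of $\Reg(H)$ nor the identification of the reflection with a topology on $H$. Yours makes explicit that the reflection topology is literally $\tau_c$ on the same underlying set.
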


\begin{proof}
Let $C((H,\tau),\R)$ denote the set of all continuous real-valued functions on $(H, \tau)$. Consider the evaluation map
$$
e: H \to \mathbb{R}^{C((H,\tau),\R)},
$$
defined by $e(x) = (f(x))_{f \in C((H,\tau),\R)}$ for each $x\in H$.
Since the continuous real-valued functions $C((H,\tau),\R)$ separate points, the map $e$ is injective. The initial topology on $H$ generated by $C((H,\tau),\R)$ is denoted by $\tau_c$. Then one can easily show that $
e: (H,\tau_c)\to \mathbb{R}^{C((H,\tau),\R)}$ is a topological embedding, which implies that $(H, \tau_c)$ is a Tychonoff space and $\tau_c\subseteq \tau$.

Next, we show that $(H, \tau_c)$ is a semitopological group. Let $L_g: H \to H$ denote the left translations by $g \in H$. Take any subbasic element $f^{-1}(U)$ in $(H,\tau_c)$, where $f\in C((H,\tau),\R) $ and $U$ is open in $\R$.
Then $L_g^{-1}(f^{-1}(U))=(f\circ L_g)^{-1}(U)$. Since $(H,\tau)$ is semitopological group, $f\circ L_g$ is a continuous function on $(H,\tau)$. Thus, $(f\circ L_g)^{-1}(U)$ is open in $(H, \tau_c)$, because $(H, \tau_c)$ is generated by $C((H,\tau),\R)$. This shows that $L_g$ is continuous on $(H, \tau_c)$. Similarly, one can show that every right translations $R_g$ by $g \in H$ is continuous on $(H, \tau_c)$. Thus, $(H, \tau_c)$ is a semitopological group. If inversion is
$\tau$-continuous, then one can show that inversion is
$\tau_c$-continuous by the same way.

Now we verify the universal property for the Tychonoff reflection. Let $f:(H,\tau)\to X$ be continuous, where $X$ is Tychonoff. Embed
$X$ into a product of real lines, say $X \subseteq \mathbb{R}^I$ for some index set $I$. For each coordinate projection $\pi_\alpha: \mathbb{R}^I \to \mathbb{R}$, the composition $\pi_\alpha \circ f$ is a continuous real-valued function on $(H, \tau)$, hence $\pi_\alpha \circ f \in C((H,\tau),\R)$. By the definition of $\tau_c$, each $\pi_\alpha \circ f$ is continuous with respect to $\tau_c$. The identity map $id_H: (H, \tau) \to (H, \tau_c)$ is continuous because $\tau_c \subseteq \tau$, and we have $f = f \circ id_H$. This proves that $(H, \tau_c)$ is the Tychonoff reflection of $(H, \tau)$.

Finally, suppose $H$ is Lindel\"of. Let $X$ be a regular space and $f: (H, \tau) \to X$ a continuous map. The subspace $f(H)$ is
Lindel\"of and regular and hence is normal, so $f(H)$
is Tychonoff. Applying the Tychonoff reflection property to the corestriction $f: (H, \tau) \to f(H)$, we obtain a continuous map $h: (H, \tau_c) \to f(H)$ such that $f = h \circ id_H$. Therefore, $(H, \tau_c)$ is also the regular reflection of $(H, \tau)$.
\end{proof}

Let $E=\Q^2$, written additively, and let $o=(0,0)$. Write $\rho$ for
its Euclidean subspace topology, and $\|\cdot\|$ for the Euclidean norm, i.e., $\|x\|=\sqrt{x_1^2+x_2^2}$, where $x=(x_1,x_2)\in \R^2$.
For $n\in\N=\{1,2,\ldots\}$, put
\begin{equation}\label{eq:cones}
 U_n=\{o\}\cup
       \{(s,t)\in\Q^2: |t|<|s|<1/n\}.
\end{equation}
Put
\begin{equation}\label{eq:cones1}
\tau=\{U\subseteq E: \text{~for each~} a\in U \text{~there is ~}n\in\N \text{~such that~ } a+U_n\subseteq U\}.
\end{equation}
Then one can easily show that
\begin{equation}\label{eq:cones2}
E \text{~is endowed the topology~} \tau \text{~is a quasitopological group, denote~} (E,\tau) \text{~by~} G,
\end{equation}
 Since $E$ is countable, the
displayed base is countable. Also $\rho\subseteq\tau$, so $G$ is
Hausdorff and the two coordinate functions separate its points.

For use below, define
\begin{equation}\label{eq:K-A}
 K=\{(s,t)\in E:|t|\leq |s|\},\qquad
 A=E\setminus K.
\end{equation}
The set $K$ is $\rho$-closed, and $U_n\subseteq K$ for every $n$.
The following boundary observation is central:
\begin{equation}\label{eq:boundary}
 (r,-r),(r,r)\in\cl_\tau U_n
       \quad\text{whenever }r\in\Q\text{ and }0\leq |r|<1/n.
\end{equation}
Indeed, since $(0,0) = o \in U_n$, it trivially follows that $(0,0) \in \cl_\tau U_n$.
(i) For $(r,r)$ and $(r,-r)$ when $0 < r < 1/n$: Fix $m \in \mathbb{N}$. Choose a rational $d > 0$ such that $d < \min\{1/m, 1/n - r\}$. Since $0 < d < 1/m$, we have $(d,0) \in U_m$. Consider the point $(r,r) + (d,0) = (r+d, r)$ and $(r,-r) + (d,0) = (r+d, -r)$. Because $d < 1/n - r$, we have $|r+d| = r+d < 1/n$. Also, since $d > 0$, we have $|-r| = |r| < |r+d|< 1/n $, which implies $(r+d, r), (r+d, -r)\in U_n$ by (\ref{eq:cones}). Therefore, both $(r,r) + U_m$ and $(r,-r) + U_m$ meet $U_n$.
(ii) For $(r,r)$ and $(r,-r)$ when $-1/n < r <0$: Fix $m \in \mathbb{N}$. Choose a rational $d > 0$ such that $d < \min\{1/m, 1/n + r\}$. Since $0 < d < 1/m$, we have $(-d,0) \in U_m$. Consider the point $(r,r) + (-d,0) = (r-d, r)$ and $(r,-r) + (-d,0) = (r-d, -r)$. Because $d > 0$ and $r < 0$, we have $ |r-d| = |r| + d.$
Thus, $ |r| < |r| + d = |r-d|$ and $ |-r| = |r| < |r| + d = |r-d|$.
Also, since $d < 1/n + r = 1/n - |r|$, we have
$
|r-d| = |r| + d < |r| + (1/n - |r|) = 1/n.
$
Therefore, $|r|=|-r| < |r-d| < 1/n$ holds, which implies $(r-d, r), (r-d, -r) \in U_n$ by (\ref{eq:cones}). Hence, both $(r,r) + U_m$ and $(r,-r) + U_m$ meet $U_n$.

A topological space is semiregular if regular open sets form a base.
\begin{lemma}\label{lem:semiregular}
The quasitopological group $G$ defined by (\ref{eq:cones2}) is countable, second-countable, Hausdorff and semiregular but is not regular.
\end{lemma}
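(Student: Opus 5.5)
The plan is to verify the listed properties one by one. The only real work is semiregularity, and for it we will show that each basic set $a+U_n$ is itself regular open.

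\emph{Countable, second-countable, Hausdorff.} Countability is immediate since $E=\Q^2$. By the definition (\ref{eq:cones1}) of $\tau$ and the fact (\ref{eq:cones2}) that translations are homeomorphisms, the family $\{a+U_n: a\in E,\ n\in\N\}$ is a countable base once we know each $U_n$ is $\tau$-open. To see that $U_n$ is open, take $a=(s,t)\in U_n\setminus\{o\}$, so that $|t|<|s|<1/n$. All three inequalities are strict, so for $m$ large every point $(s+d,t+e)$ with $|e|<|d|<1/m$ still satisfies $|t+e|<|s+d|<1/n$. Hence $a+U_m\subseteq U_n$. At $o$ we simply have $o+U_n=U_n$. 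Hausdorffness follows from $\rho\subseteq\tau$, as already noted; in particular $G$ is $T_1$.

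\emph{Semiregularity.} We will show $\Int_\tau\cl_\tau U_n=U_n$. Since $\rho\subseteq\tau$, we get $\cl_\tau U_n\subseteq\cl_\rho U_n\subseteq\{(s,t):|t|\le|s|\le 1/n\}$. Now take a point $x$ of this set that is not in $U_n$; we check that no $x+U_m$ lies inside the set, by cases.
\begin{itemize}
\item If $|s|=1/n$, say $s>0$, then adding $(d,0)$ with $0<d<1/m$ gives first coordinate larger than $1/n$.
\item If $x=(r,\pm r)$ with $0<|r|<1/n$, we move toward the $t$-axis by adding $(d,0)$ with $d$ of sign opposite to $r$ and $0<|d|<\min\{1/m,|r|\}$. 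The resulting point has $|r+d|<|r|$, so it leaves $K$.
\end{itemize}
In both cases $x\notin\Int_\tau\cl_\tau U_n$, so $\Int_\tau\cl_\tau U_n\subseteq U_n$. Since $U_n$ is open, the reverse inclusion holds as well, and $U_n$ is regular open. Translations are homeomorphisms, so every $a+U_n$ is regular open, and these sets form a base.

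\emph{Not regular.} Take the open neighbourhood $U_1$ of $o$. If $G$ were regular, some basic neighbourhood $U_m$ would satisfy $\cl_\tau U_m\subseteq U_1$. But by (\ref{eq:boundary}), $(r,r)\in\cl_\tau U_m$ for any rational $0<r<1/m$, whereas $(r,r)\notin U_1$ because $|t|<|s|$ fails. So no such $U_m$ exists.

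The main obstacle is the interior computation in the semiregularity step. What makes it work is that, at a diagonal boundary point, a horizontal move of the correct sign immediately exits the cone $K$.
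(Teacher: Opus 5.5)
Your proof is correct and follows essentially the same route as the paper: you bound $\cl_\tau U_n$ by the Euclidean closure $\{(s,t):|t|\le|s|\le 1/n\}$, use horizontal displacements $(d,0)\in U_m$ (outward when $|s|=1/n$, toward the vertical axis at diagonal points) to get $\Int_\tau\cl_\tau U_n=U_n$, and use \eqref{eq:boundary} to show no $U_m$ has closure inside $U_1$. The only addition is that you explicitly check each $U_n$ is $\tau$-open, so that the translates $a+U_n$ form a countable base; the paper leaves this implicit.
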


\begin{proof}
$G$ is first-countable and countable, $G$ is second-countable. Since the topology $\tau$ of $G$ is finer than the Euclidean subspace topology $\rho$, $G$ is Hausdorff and
\[
 \cl_\tau U_n\subseteq\cl_\rho U_n
 =\{(s,t)\in E:|t|\leq |s|\leq1/n\}=:K_n.
\]
A point of $K_n\setminus U_n$ has nonzero first coordinate. If
$|s|=1/n$, arbitrarily small horizontal displacements away from the
vertical axis leave $K_n$. If $|t|=|s|<1/n$, horizontal displacements
toward that axis leave $K_n$. Such displacements belong to every
sufficiently large $U_m$. Hence $\Int_\tau K_n=U_n$, and $U_n$ is
regular open. Translates give semiregularity.

By~\eqref{eq:boundary}, the closure of every $U_n$ contains a point
$(r,r)\ne o$, which is outside $U_1$. Thus no neighborhood of $o$ has
closure contained in $U_1$, and $G$ is not regular.
\end{proof}

Let
\begin{equation}\label{eq:Y-E}
                  Y=(E,\tau_c).
\end{equation}
By Lemma~\ref{lem:weak-topology}, $Y$ is simultaneously $\Tych(G)$ and
$\Reg(G)$, and the canonical map $q:G\to Y$ is the identity on $E$.
The coordinate functions are $\tau_c$-continuous, so
\begin{equation}\label{eq:topology-order}
                  \rho\subseteq\tau_c\subseteq\tau.
\end{equation}

\begin{lemma}\label{lem:new-closure}
The point $o$ belongs to $\cl_Y A$, where $A$ and $Y$ are defined by (\ref{eq:K-A}) and (\ref{eq:Y-E}), respectively.
\end{lemma}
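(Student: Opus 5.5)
The plan is to reduce the statement to one continuous function and then use the boundary observation \eqref{eq:boundary} together with continuity of that function in $\tau$ at a point $(r,r)$ on the edge of the cone. By Lemma~\ref{lem:weak-topology}, the topology $\tau_c$ of $Y$ is the initial topology generated by $C(G,\R)$. So a base at $o$ in $Y$ consists of the sets
\[
W=\{x\in E:\ |f_i(x)-f_i(o)|<\eps \text{ for } i=1,\dots,k\},
\]
where $f_1,\dots,f_k\in C(G,\R)$ and $\eps>0$. Put
\[
g=\max_{i\le k}|f_i-f_i(o)|.
\]
Then $g$ is $\tau$-continuous, $g(o)=0$, and $W=\{g<\eps\}$. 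It therefore suffices to prove the following: for every $\tau$-continuous $g\colon G\to\R$ with $g(o)=0$ and every $\eps>0$, there is $a\in A$ with $|g(a)|<\eps$.

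\textbf{Step 1: control $g$ at a cone-edge point.} Since $g$ is $\tau$-continuous at $o$ and the sets $U_n$ form a neighborhood base at $o$, there is $n$ with $|g|<\eps/2$ on $U_n$. Fix a rational $r$ with $0<r<1/n$. By \eqref{eq:boundary}, $(r,r)\in\cl_\tau U_n$. Continuity of $g$ then gives $|g(r,r)|\le\eps/2$.

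\textbf{Step 2: move from $(r,r)$ into $A$.} Since $g$ is $\tau$-continuous at $(r,r)$, there is $m$ with $|g(x)-g(r,r)|<\eps/2$ for all $x\in(r,r)+U_m$; hence $|g|<\eps$ on $(r,r)+U_m$. Choose a rational $\delta$ with $0<\delta<\min\{1/m,r\}$. Then $(-\delta,0)\in U_m$, so $a=(r-\delta,r)\in(r,r)+U_m$. Moreover $0<r-\delta<r$, so $|r-\delta|<|r|$, which means $a\notin K$, i.e.\ $a\in A$. Thus $a\in A\cap W$. Since $W$ was an arbitrary basic neighborhood of $o$ in $Y$, this shows $o\in\cl_Y A$.

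\textbf{Main obstacle.} The only delicate point is the reduction to a single function: one must know that $\tau_c$-neighborhoods of $o$ are exactly the sets given by finitely many $\tau$-continuous real functions, and this is what Lemma~\ref{lem:weak-topology} provides. The geometric input is \eqref{eq:boundary}. Each $U_m$ contains horizontal displacements, and shifting a diagonal point horizontally toward the vertical axis leaves the cone $K$. This is exactly why $o$ can be $\tau$-isolated from $A$ (indeed $U_1\cap A=\emptyset$) and yet lie in the $\tau_c$-closure of $A$.
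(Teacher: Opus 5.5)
Your proof is correct and is essentially the paper's argument: every $Y$-neighborhood of $o$ contains $\cl_\tau U_n$ for some $n$, hence the cone-edge point $(r,r)$ from \eqref{eq:boundary} and a translate $(r,r)+U_m$, from which a horizontal shift lands in $A$. The only difference is that the paper uses regularity of $Y$ to pick $V$ with $\cl_Y V\subseteq O$, whereas you carry out the same step with the $\eps/2$ estimate for the single function $g$ defining a basic neighborhood.
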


\begin{proof}
Let $O$ be an open neighborhood of $o$ in $Y$. By regularity, choose
an open neighborhood $V$ of $o$ such that $\cl_Y V\subseteq O$.
Since $q$ is continuous, some $U_n$ is contained in $V$. Continuity
also gives
\[
       \cl_\tau U_n\subseteq\cl_Y U_n
                     \subseteq\cl_Y V\subseteq O.
\]
Choose a rational $r$ with $0<r<1/n$. By~\eqref{eq:boundary}, the
point $z=(r,r)$ belongs to $O$. The set $O$ is $\tau$-open, so
$z+U_m\subseteq O$ for some $m$. Take a rational $d$ with
$0<d<\min\{r,1/m\}$. Then
\[
       a=(r-d,r)=z+(-d,0)\in O\cap A.
\]
Every neighborhood $O$ therefore meets $A$.
\end{proof}

\begin{lemma}\label{lem:bump}
Suppose $(a_j)_{j\geq1}$ is a sequence in $A$ defined by (\ref{eq:K-A}) such that
\[
 0<\|a_{j+1}\|<\tfrac14\|a_j\|\quad(j\geq1).
\]
There exists a continuous function $f:G\to[0,1]$ such that $f|_K=0$ and
$f(a_j)=1$ for every $j\geq1$, where $G$ and $K$ are defined by (\ref{eq:cones2}) and (\ref{eq:K-A}), respectively.
\end{lemma}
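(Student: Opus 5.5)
Our plan is to build $f$ as a sum of bumps on Euclidean balls, $f=\sum_j \varphi_j$, where each $\varphi_j$ is supported on a small Euclidean ball around $a_j$ that lies in $A$. Since $a_j\in A$, we have $|t_j|>|s_j|$ for $a_j=(s_j,t_j)$. The Euclidean distance from $a_j$ to the closed cone $\{|t|\le|s|\}\subseteq\R^2$ is therefore a positive number $\delta_j$, and $\delta_j\le\|a_j\|$.

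\textbf{Choice of radii.} Put $r_j=\min\{\delta_j/2,\ \|a_j\|/4\}$ and define
\[
\varphi_j(x)=\max\{0,\,1-\|x-a_j\|/r_j\},\qquad x\in E.
\]
Each $\varphi_j$ is $\rho$-continuous, takes values in $[0,1]$, equals $1$ at $a_j$, and vanishes outside the open ball $B_j=B(a_j,r_j)$. Since $r_j<\delta_j$, the closure of $B_j$ misses $K$.

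\textbf{Disjointness.} Take $x\in B_j$. Then $\tfrac34\|a_j\|<\|x\|<\tfrac54\|a_j\|$. The hypothesis $\|a_{j+1}\|<\tfrac14\|a_j\|$ gives $\tfrac54\|a_{j+1}\|<\tfrac{5}{16}\|a_j\|<\tfrac34\|a_j\|$, so the annuli containing $B_j$ and $B_{j+1}$ are disjoint. Iterating, the annuli for distinct indices are pairwise disjoint. Hence $f=\sum_j\varphi_j$ is a pointwise sum with at most one nonzero term at each point. This gives $f(a_j)=1$, $f|_K=0$, and $f$ maps into $[0,1]$.

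\textbf{Continuity, the main step.} We prove $f$ is $\rho$-continuous at every point except $o$, and $\tau$-continuous at $o$. Since $\rho\subseteq\tau$, this suffices.

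At a point $x\ne o$, note that $\|a_j\|\to0$. Hence some Euclidean neighborhood of $x$ meets only finitely many of the sets $\overline{B_j}$, and there $f$ is a finite sum of continuous functions.

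At $o$ the bumps accumulate, so Euclidean continuity genuinely fails. This is why we use $\tau$. The neighborhood $U_1$ of $o$ satisfies $U_1\subseteq K$, and $f$ vanishes on $K$, so $f$ is constant $0=f(o)$ on $U_1$. Thus $f$ is $\tau$-continuous at $o$.

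The only delicate point is ensuring that no bump reaches $K$, in particular near $o$ or near the boundary lines $|t|=|s|$ of $K$. This is exactly what the choice $r_j\le\delta_j/2$ guarantees. Thus $f:G\to[0,1]$ is continuous with the required properties.
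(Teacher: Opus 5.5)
Your proposal is correct and follows essentially the same route as the paper: bumps $\max\{0,1-\|x-a_j\|/r_j\}$ on Euclidean balls inside $A$ with $r_j\le\|a_j\|/4$, pairwise disjointness via the annuli $\tfrac34\|a_j\|<\|x\|<\tfrac54\|a_j\|$, and local finiteness away from $o$ combined with $\rho\subseteq\tau$. Continuity at $o$ in $\tau$ comes, as in the paper, from $f$ vanishing on $U_1\subseteq K$; your explicit radius $r_j=\min\{\delta_j/2,\|a_j\|/4\}$ and your explicit use of the ratio $\tfrac14$ for pairwise disjointness just make precise steps the paper states more tersely.
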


\begin{proof}
Since $A$ is open in $E$ with Euclidean topology $\rho$, for each $a_j$ we can choose $r_j>0$ so that the Euclidean ball $B_{r_j}^{a_j}=\{x\in E:\|x-a_j\|<r_j \}$ is contained in $A$ and $r_j<\|a_j\|/4$.
Since
\begin{align*}
3\|a_j\|/4 &= \bigl| \|a_j\|/4 - \|a_j\| \bigr| \\
           &< \bigl| \|x-a_j\| - \|a_j\| \bigr| \\
           &\leq \|x\| \\
           &\leq \|x-a_j\| + \|a_j\| \\
           &< \|a_j\|/4 + \|a_j\| \\
           &= 5\|a_j\|/4,
\end{align*}
 the balls $\{B_{r_j}^{a_j}:j\geq 1\}$ are pairwise
disjoint and they form a locally finite family in $(E\setminus\{o\},\rho)$.
For each $j\geq 1$, define $b_j:E\setminus\{o\}\to [0,1]$ by
$$
 b_j(x)=\max\{0,1-\|x-a_j\|/r_j\},
$$
 for each $x\in E\setminus\{o\}.$ Clear, the family $\{b_j: j\geq 1\}$ of continuous functions is locally finite, so $f:E\to [0,1]$ defined by
 $$f(x)=\sum_{j=1}^\infty b_j(x)$$
 for each $x\in E\setminus\{o\}$ and $f(o)=0$ is $\rho$-continuous at each point $x\in E\setminus\{o\}$ and hence $f$ is $\tau$-continuous at each point $x\in E\setminus\{o\}$ by $\rho\subseteq \tau$.
 $f$ vanishes on $K$ defined by (\ref{eq:K-A}), and in particular on the $\tau$-neighborhood
$U_1$ of $o$, so it is also $\tau$-continuous at $o$. Thus $f:G\to[0,1]$ is continuous. Finally,
$f(a_j)=1$ for every $j\geq1$ and $f|_K=0$ by construction.
\end{proof}

Recall that a topological space $X$ is called a \emph{Fr\'{e}chet--Urysohn space} if for every subset $A \subseteq X$ and every point $x \in \cl_X A$, there exists a sequence $(x_n)_{n \in \mathbb{N}}$ in $A$ such that $x_n \to x$.

\begin{proposition}\label{prop:not-frechet}
 $Y=(E,\tau_c)$ defined by (\ref{eq:Y-E}) is not a
Fr\'echet--Urysohn space. Furthermore, $Y$ is not first countable.
\end{proposition}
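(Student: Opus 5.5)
We show that $o\in\cl_Y A$ (Lemma~\ref{lem:new-closure}) but that no sequence in $A$ converges to $o$ in $Y$. This gives the failure of the Fr\'echet--Urysohn property. Since every first-countable space is Fr\'echet--Urysohn, it also shows that $Y$ is not first countable.

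Suppose, towards a contradiction, that $(x_k)_{k\ge1}$ is a sequence in $A$ with $x_k\to o$ in $Y$. By \eqref{eq:topology-order}, $\rho\subseteq\tau_c$, so $x_k\to o$ in the Euclidean topology and $\|x_k\|\to0$. Since $x_k\in A$, we have $x_k\neq o$, so $\|x_k\|>0$ for all $k$.

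We pass to a subsequence $(a_j)=(x_{k_j})$ with $0<\|a_{j+1}\|<\tfrac14\|a_j\|$ for every $j$. This is possible by induction: having chosen $a_j$, the norms $\|x_k\|$ tend to $0$ and are positive, so some later term has norm below $\tfrac14\|a_j\|$. A subsequence of a sequence converging to $o$ still converges to $o$ in $Y$.

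Lemma~\ref{lem:bump} now gives a $\tau$-continuous $f:G\to[0,1]$ with $f|_K=0$ and $f(a_j)=1$ for all $j$. By the definition of $\tau_c$ as the initial topology generated by $C((H,\tau),\R)$ (Lemma~\ref{lem:weak-topology}), $f$ is $\tau_c$-continuous, so $f^{-1}((-\tfrac12,\tfrac12))$ is a $Y$-open set. It contains $o$, because $o\in K$ and hence $f(o)=0$. It contains no $a_j$, because $f(a_j)=1$. This contradicts $a_j\to o$ in $Y$. Hence $Y$ is not Fr\'echet--Urysohn, and therefore not first countable.

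There is no real obstacle once Lemmas~\ref{lem:new-closure} and~\ref{lem:bump} are available. The only point needing care is the sparse-subsequence step, which relies on Euclidean convergence coming from $\rho\subseteq\tau_c$.
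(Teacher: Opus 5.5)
Your proof is correct and follows the paper's argument: $o\in\cl_Y A$ by Lemma~\ref{lem:new-closure}, Euclidean convergence via $\rho\subseteq\tau_c$, a sparse subsequence, and the bump function of Lemma~\ref{lem:bump}, whose $\tau_c$-continuity separates $o$ from that subsequence. You also state explicitly two points the paper leaves implicit: how the subsequence is extracted, and that failure of the Fr\'echet--Urysohn property rules out first countability.
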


\begin{proof}
By Lemma \ref{lem:new-closure}, the point $o$ belongs to $\cl_Y A$, where $A$ is defined by (\ref{eq:K-A}). Suppose a sequence in $A$ converges to $o$ in $Y$. By
\eqref{eq:topology-order}, it also converges to $o$ in the Euclidean
topology. It therefore has a subsequence $(a_j)$ satisfying the
hypothesis of Lemma~\ref{lem:bump}. Let $f$ be the function provided
by that lemma. The definition of $\tau_c$ makes $f$ continuous on
$Y$, so convergence would imply $f(a_j)\to f(o)=0$. This contradicts
$f(a_j)=1$. Thus, we have proved that $Y$ is not a
Fr\'echet--Urysohn space.
\end{proof}

\begin{theorem}\label{thm:main-example}
There exists a countable, second-countable, Hausdorff, semiregular
quasitopological abelian group $G$ whose continuous real-valued
functions separate points such that
\begin{enumerate}[label=\textup{(\roman*)},leftmargin=*]
\item the canonical maps to $\Reg(G)$ and $\Tych(G)$ are bijective;
\item $\Reg(G)$ and $\Tych(G)$ are canonically topologically isomorphic;
\item $\Reg(G)$ and $\Tych(G)$ are countable, zero-dimensional
quasitopological groups that are not Fr\'echet--Urysohn.
\end{enumerate}
Consequently, neither first countability nor second countability is
preserved by the regular or Tychonoff reflection, even for countable
Hausdorff quasitopological abelian groups.
\end{theorem}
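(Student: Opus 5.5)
We take $G=(E,\tau)$ from (\ref{eq:cones2}) and collect the pieces already proved. \emph{Basic properties.} $G$ is abelian because $E=\Q^2$ is. It is countable, second-countable, Hausdorff and semiregular by Lemma~\ref{lem:semiregular}. Its continuous real-valued functions separate points because $\rho\subseteq\tau$, so the two coordinate functions are $\tau$-continuous. \emph{Items (i) and (ii).} A countable space is Lindel\"of, so Lemma~\ref{lem:weak-topology} applies. It gives $\Tych(G)=\Reg(G)=Y=(E,\tau_c)$ and identifies the canonical map $q$ with the identity on $E$. Hence $q$ is bijective, which is (i). The two reflections are the same object with the same canonical map, so the canonical isomorphism between them is the identity, which is (ii). Lemma~\ref{lem:weak-topology} also shows that $Y$ is a quasitopological group, since $G$ is one.

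\emph{Item (iii).} Countability is immediate. Non-Fr\'echet--Urysohn is Proposition~\ref{prop:not-frechet}. The only new point is zero-dimensionality, and I plan to use the standard fact that a countable Tychonoff space is zero-dimensional. Let $x\in O$ with $O$ open in $Y$. Choose a continuous $g\colon Y\to[0,1]$ with $g(x)=0$ and $g\equiv1$ off $O$; this is possible because $Y$ is Tychonoff. The set $g(E)$ is countable, so there is $c\in(0,1)\setminus g(E)$. Then $g^{-1}([0,c))=g^{-1}([0,c])$ is clopen, contains $x$, and lies in $O$. So clopen sets form a base of $Y$, and $Y$ is zero-dimensional.

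\emph{Consequences.} A first-countable space is Fr\'echet--Urysohn, so $Y$ is not first-countable, and therefore not second-countable either. On the other side, $G$ is second-countable, hence first-countable. This shows that neither first nor second countability passes to $\Reg(G)$ or $\Tych(G)$, even for this countable Hausdorff quasitopological abelian group.

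There is no real obstacle, since the substantive work is in Lemma~\ref{lem:new-closure} and Lemma~\ref{lem:bump}. What remains to check is that Lemma~\ref{lem:weak-topology} may be applied to a countable space through Lindel\"ofness, and the countability argument for zero-dimensionality.
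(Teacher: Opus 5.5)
Your proposal is correct and follows the paper's proof essentially step for step. You use Lemma~\ref{lem:semiregular} for the basic properties, Lemma~\ref{lem:weak-topology} (via Lindel\"ofness of a countable space) for (i) and (ii), and Proposition~\ref{prop:not-frechet} together with zero-dimensionality of countable Tychonoff spaces for (iii). Your explicit proof of that zero-dimensionality fact (choosing $c\notin g(E)$) and your derivation of non-first-countability from the failure of the Fr\'echet--Urysohn property fill in details the paper leaves implicit, and both are sound.
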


\begin{proof}
Let $G$ be defined by (\ref{eq:cones2}). Then $G$ is a countable, second-countable, Hausdorff, semiregular
quasitopological abelian group by Lemma \ref{lem:semiregular}. Since the topology $\tau$ of $G$ is finer than the Euclidean subspace topology $\rho$, continuous real-valued
functions on $G$ separate points.

 By Lemma \ref{lem:weak-topology}, $Y=(E,\tau_c)$ defined by (\ref{eq:Y-E}) is the $\Tych(G)$ and $\Reg(G)$. Clearly, the canonical maps to $\Reg(G)$ and $\Tych(G)$ are bijective. Thus,
(i) and (ii) are proved.

For (iii). From Proposition \ref{prop:not-frechet} and (ii) it follows that $\Reg(G)$ and $\Tych(G)$ are not Fr\'echet--Urysohn.
 Since every countable Tychonoff space is zero-dimensional, $\Tych(G)$ is zero-dimensional Hence, so is $\Reg(G)$ by (ii).
\end{proof}

\begin{remark}\label{rem:boundary-sequence}
The boundary sequence $z_j=(1/j,1/j)$ does converge to $o$ in $Y$.
Indeed, the proof of Lemma~\ref{lem:new-closure} shows that every
$Y$-neighborhood of $o$ contains $\cl_\tau U_n$ for some $n$, and
\eqref{eq:boundary} contains the tail of $(z_j)$ in this closure.
In $G$ the same sequence never enters $U_1$. The obstruction in
Proposition~\ref{prop:not-frechet} is different: points strictly
outside $K$ cannot approach $o$ by any sequence in the reflected topology.
\end{remark}

For completeness, the example also rules out a commonly useful mapping
property of the canonical homomorphism. A continuous surjection
$p:X\to Z$ is called \emph{$d$-open} if
$p(U)\subseteq\Int_Z\cl_Z p(U)$ for every open set $U\subseteq X$.

\begin{lemma}\label{lem:d-open}
Let $p:X\to Z$ be a continuous $d$-open surjection, with $Z$ regular.
For $x\in X$,
\[
                   \chi(p(x),Z)\leq\chi(x,X).
\]
Also $w(Z)\leq w(X)$, with cardinal invariants taken to be infinite.
\end{lemma}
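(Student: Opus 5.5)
Fix $x\in X$ and a local base $\mathcal B$ at $x$ with $|\mathcal B|=\chi(x,X)$. The candidate local base at $p(x)$ is
\[
\mathcal C=\{\Int_Z\cl_Z p(U):U\in\mathcal B\}.
\]
By $d$-openness, $p(x)\in p(U)\subseteq\Int_Z\cl_Z p(U)$, so every member of $\mathcal C$ is an open neighborhood of $p(x)$. Clearly $|\mathcal C|\le|\mathcal B|$.

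Now let $W$ be any open neighborhood of $p(x)$ in $Z$. Regularity of $Z$ gives an open $V\ni p(x)$ with $\cl_Z V\subseteq W$. Continuity of $p$ makes $p^{-1}(V)$ an open neighborhood of $x$, so some $U\in\mathcal B$ satisfies $U\subseteq p^{-1}(V)$. Then $p(U)\subseteq V$, hence
\[
\Int_Z\cl_Z p(U)\subseteq\cl_Z V\subseteq W.
\]
Thus $\mathcal C$ is a local base at $p(x)$. Since $p$ is surjective, every point of $Z$ has the form $p(x)$, and this gives $\chi(p(x),Z)\le\chi(x,X)$, with the usual convention that cardinal invariants are taken to be infinite.

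For weight, I would repeat the argument globally. Take a base $\mathcal B$ of $X$ with $|\mathcal B|\le w(X)$ and put $\mathcal C=\{\Int_Z\cl_Z p(U):U\in\mathcal B\}$. Given $z\in W$ with $W$ open in $Z$, choose $x$ with $p(x)=z$, then $V$ and $U\ni x$ exactly as above. This yields $z\in\Int_Z\cl_Z p(U)\subseteq W$, so $\mathcal C$ is a base of $Z$ and $w(Z)\le w(X)$.

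The only delicate point is the membership $z\in\Int_Z\cl_Z p(U)$. This is exactly where $d$-openness is used, while regularity handles the passage from $\cl_Z V$ back into $W$. I do not expect any further obstacle; no separation axiom is needed on $X$.
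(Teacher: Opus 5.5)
Your proof is correct and follows the paper's argument essentially verbatim. You take the family $\{\Int_Z\cl_Z p(U):U\in\mathcal B\}$ over a local base (respectively, a base) of $X$; $d$-openness puts $p(x)$ in each member, and regularity of $Z$ forces containment in any given neighborhood. The only point to make explicit is that $\mathcal B$ must consist of open sets, since $d$-openness is assumed only for open $U$.
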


\begin{proof}
For a local base $\mathcal B$ of open neighborhoods of $x$, put
$V_B=\Int_Z\cl_Z p(B)$ for $B\in\mathcal B$. These sets contain
$p(x)$ by $d$-openness. Given a neighborhood $O$ of $p(x)$, choose
an open $W$ with $p(x)\in W\subseteq\cl_Z W\subseteq O$, and then
$B\in\mathcal B$ with $B\subseteq p^{-1}(W)$. It follows that
$V_B\subseteq\cl_Z W\subseteq O$, proving the character bound.
The same argument, using a base for $X$ and choosing a preimage of
each point of $Z$, proves the weight bound.
\end{proof}

\begin{corollary}\label{cor:nonquotient}
In Theorem~\ref{thm:main-example}, neither canonical reflection map
is quotient, open, or $d$-open.
\end{corollary}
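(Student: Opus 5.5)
The plan is to argue by contradiction, using the canonical map $q\colon G\to Y$. This map is the identity on $E$ and is a continuous bijection onto $Y=\Reg(G)=\Tych(G)$ (Theorem~\ref{thm:main-example}). Each of the three mapping properties, if it held, would force $Y$ to inherit a countability property of $G$ that Proposition~\ref{prop:not-frechet} says it lacks.

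\emph{Quotient.} A bijective continuous quotient map is a homeomorphism. If $q$ were quotient, every $\tau$-open set would be $\tau_c$-open. Together with \eqref{eq:topology-order} this gives $\tau=\tau_c$. Then $Y$ would be first-countable, hence Fr\'echet--Urysohn, contradicting Proposition~\ref{prop:not-frechet}. A more direct route is also available: $\tau=\tau_c$ would make $G$ Tychonoff and hence regular, contradicting Lemma~\ref{lem:semiregular}. Either argument suffices.

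\emph{Open.} Every open continuous surjection is quotient, so this case follows from the previous one. For a bijection, openness already gives $\tau\subseteq\tau_c$ directly.

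\emph{$d$-open.} This is the only case that needs Lemma~\ref{lem:d-open}. The space $Y$ is Tychonoff, hence regular, so the lemma applies to $p=q$ and any $x\in G$. It gives $\chi(q(x),Y)\le\chi(x,G)=\aleph_0$, since $G$ is first-countable (the sets $x+U_n$ form a local base). So $Y$ would be first-countable, hence Fr\'echet--Urysohn, again contradicting Proposition~\ref{prop:not-frechet}. Alternatively one can use $w(Y)\le w(G)=\aleph_0$.

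The main point needing care is the $d$-open case: one must check that the hypotheses of Lemma~\ref{lem:d-open} are met, namely regularity of $Z=Y$ and countable character at each point of $G$. Both are immediate here. Note also that this case cannot be reduced to the quotient case, because a $d$-open bijection need not be a homeomorphism.
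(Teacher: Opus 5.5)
Your proposal is correct and follows the paper's proof: a bijective quotient map would be a homeomorphism (impossible, since $G$ is second-countable while $Y$ is not first-countable), open surjections are quotient, and $d$-openness is ruled out by Lemma~\ref{lem:d-open}, whose regularity hypothesis holds because $Y$ is Tychonoff. Your additional details are accurate: the alternative contradiction via non-regularity of $G$, and the remark that the $d$-open case does not reduce to the quotient case.
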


\begin{proof}
A bijective quotient map is a homeomorphism, whereas $G$ is
second-countable and $Y$ is not first-countable. An open continuous
surjection is quotient. Finally, $d$-openness would contradict
Lemma~\ref{lem:d-open}.
\end{proof}

\begin{remark}\label{rem:semireg-reflection}
The example explicitly separates semiregularization from regular
reflection: $G_{\mathrm{sr}}=G$, but $G$ is not regular. Thus one cannot
identify $T_3(H)$ with $H_{\mathrm{sr}}$ for arbitrary semitopological,
or even quasitopological, groups, where $T_3$ denotes point--closed-set
separation. In particular, this identification, used in the
semitopological part of the proof of~\cite[Theorem~4.2]{PW}, does not
justify the countability assertion made there.
\end{remark}
\section{A second-countable quasitopogical group with factorization obstructions}\label{sec3}
The following extension lemma is a parameter version of the usual
Dugundji construction~\cite{Dugundji}. We give the details because the
parameter space $X$ need not be regular, and no normality of the
product $X\times B$ is available.

\begin{lemma}\label{lem:parameter-extension}
Let $X$ be any topological space, $B$ a metrizable space, and $P$ a
nonempty closed subspace of $B$. Every continuous function
$E:X\times P\to[0,1]$ extends to a continuous function
$F:X\times B\to[0,1]$.
\end{lemma}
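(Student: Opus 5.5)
I would run the classical Dugundji construction in the $B$-variable only, treating $x\in X$ as a passive parameter, so that no regularity of $X$ or normality of $X\times B$ is needed. Fix a metric $d$ on $B$. If $P=B$ there is nothing to do, so assume $B\setminus P\neq\emptyset$. The open set $B\setminus P$ is metrizable and hence paracompact. Cover it by the balls $B(b,\tfrac14 d(b,P))$ for $b\in B\setminus P$, and choose a locally finite (in $B\setminus P$) open refinement $\{W_s:s\in S\}$ together with a subordinate partition of unity $\{\lambda_s\}$ on $B\setminus P$. For each $s$ pick $b_s$ with $W_s\subseteq B(b_s,\tfrac14 d(b_s,P))$, and pick $p_s\in P$ with $d(b_s,p_s)<2d(b_s,P)$.

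Define
\[
F(x,b)=E(x,b)\ (b\in P),\qquad F(x,b)=\sum_{s}\lambda_s(b)\,E(x,p_s)\ (b\notin P).
\]
Then $F$ takes values in $[0,1]$ because it is a convex combination. On the open set $X\times(B\setminus P)$, the sum is locally finite in $b$ uniformly in $x$: near $b_0$ only finitely many $\lambda_s$ are nonzero. Each term $(x,b)\mapsto\lambda_s(b)E(x,p_s)$ is continuous, since $x\mapsto E(x,p_s)$ is continuous. Hence $F$ is continuous there.

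The key step, and the main obstacle, is continuity at a point $(x_0,p_0)$ with $p_0\in P$. Here we cannot use uniform continuity, since $X$ is arbitrary and not compact. Fix $\eps>0$. By continuity of $E$ at $(x_0,p_0)$, choose an open $V\ni x_0$ and $\delta>0$ with $|E(x,p)-E(x_0,p_0)|<\eps$ for $x\in V$ and $p\in P$, $d(p,p_0)<\delta$. This is a product neighbourhood, coming straight from the product topology on $X\times P$.

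The standard Dugundji estimate then finishes the argument. If $b\notin P$, $d(b,p_0)<\delta/6$, and $\lambda_s(b)>0$, then $b\in W_s$. This gives $d(b,b_s)<\tfrac14 d(b_s,P)$, so $d(b_s,P)\le \tfrac43 d(b,P)\le\tfrac43 d(b,p_0)$. Therefore
\[
d(p_s,p_0)\le d(p_s,b_s)+d(b_s,b)+d(b,p_0)\le \bigl(2+\tfrac14\bigr)\tfrac43 d(b,p_0)+d(b,p_0)<6\,d(b,p_0)<\delta.
\]
Consequently, for $x\in V$ each $E(x,p_s)$ occurring in the convex combination is within $\eps$ of $E(x_0,p_0)$, and hence so is $F(x,b)$. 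For $b\in P$ the bound is immediate. Thus $F$ is continuous on $V\times B(p_0,\delta/6)$ near $(x_0,p_0)$, and $F$ extends $E$.
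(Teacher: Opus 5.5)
Your proposal is correct and is essentially the paper's own argument: the same parameterized Dugundji construction, with the cover by $\tfrac14 d(b,P)$-balls, the choice $d(b_s,p_s)<2d(b_s,P)$, local finiteness for continuity off $X\times P$, and a product neighbourhood from continuity of $E$ at points $(x_0,p_0)$. The only difference is that you use the looser radius $\delta/6$ where the paper uses $\delta/4$. Both work, since your estimate actually gives $d(p_s,p_0)<4\,d(b,p_0)$.
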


\begin{proof}
Fix a compatible metric $d$ on $B$. If $P=B$, there is nothing to
prove. For $b\in B\setminus P$, put $d_b=d(b,P)>0$. By
paracompactness of the metrizable space $B\setminus P$, choose a
locally finite partition of unity $(\lambda_i)_{i\in I}$ subordinate
to an open refinement of the cover
\[
       \{B_d(b,\frac{d_b}{4}):b\in B\setminus P\}.
\]
For each $i$, choose $b_i\in B\setminus P$ so that
$\{b:\lambda_i(b)>0\}\subseteq B_d(b_i,\frac{d_{b_i}}{4})$, and choose
$p_i\in P$ with $d(b_i,p_i)<2d_{b_i}$. Define $F:X\times B\to \R$ by
\[
 F(x,b)=
 \begin{cases}
 E(x,b),&b\in P,\\
 \displaystyle\sum_{i\in I}\lambda_i(b)E(x,p_i),&b\notin P
 \end{cases}
\]
for each $(x,b)\in X\times B.$

Since $(\lambda_i)_{i \in I}$ is a partition of unity, for every $b \in B \setminus P$, we have $\lambda_i(b) \ge 0$ for all $i \in I$ and $\sum_{i \in I} \lambda_i(b) = 1$. Also, since $E: X \times P \to [0,1]$, we have $E(x, p_i) \in [0,1]$ for all $x \in X$ and $p_i \in P$. Therefore, for $b \notin P$,
\[
 0 \le F(x,b) = \sum_{i \in I} \lambda_i(b) E(x, p_i) \le \sum_{i \in I} \lambda_i(b) \cdot 1 = 1.
\]
Thus, $F(x,b) \in [0,1]$ for all $(x,b) \in X \times (B \setminus P)$.

Fix an arbitrary point $(x_0, b_0) \in X \times (B \setminus P)$. Since $P$ is closed in $B$ and by the local finiteness of the partition of unity $(\lambda_i)_{i \in I}$, there exists an open neighborhood $V$ of $b_0$ contained in $B \setminus P$ such that only finitely many indices $i \in I_0 = \{i_1, \dots, i_n\}\subseteq I$ satisfy $\lambda_i|_V \neq 0$.

For any $(x,b) \in X \times V$, the defining sum reduces to a finite sum:
\[
 F(x,b) = \sum_{i \in I_0} \lambda_i(b) E(x, p_i).
\]
For each $i \in I_0$, the function $b \mapsto \lambda_i(b)$ is continuous on $B \setminus P$. Also, because $E$ is jointly continuous on $X \times P$ and $p_i \in P$ is fixed, the function $x \mapsto E(x, p_i)$ is continuous on $X$. Consequently, each term $\lambda_i(b) E(x, p_i)$ is jointly continuous on $X \times V$. Since $I_0$ is finite, the finite sum $F(x,b)$ is jointly continuous at $(x_0, b_0)$. As $(x_0, b_0)$ was arbitrary, $F$ is jointly continuous on $X \times (B \setminus P)$. Thus, $F$ is continuous at each point $(x_0, b_0) \in X \times (B \setminus P)$, because  $X \times (B \setminus P)$ is open in $X\times B$.

Fix $(x_0,p_0)\in X\times P$ and $\eps>0$.
Continuity of $E$
provides a neighborhood $U$ of $x_0$ and $\delta>0$ such that
\begin{equation}\label{eq:d}
 |E(x,p)-E(x_0,p_0)|<\eps
 \quad \text{for all } (x,p) \in U \times V,
\end{equation}\label{eq:Y}
where $V=\{b\in P: d(b,p_0)<\delta\}$
We have the following claim

\medskip
{\bf Claim:} $|F(x,b) - F(x_0, p_0)| < \epsilon$ whenever $(x,b) \in U \times V'$, where $V'=\{b\in B: d(b,p_0)<\delta/4\}$.
\medskip

\noindent \textit{Proof of Claim.} Take any \((x,b)\in U \times V'\). If \(b \in P\), then \(b \in V\) and the inequality follows directly from \eqref{eq:d}. Now suppose $b\notin P$ and $\lambda_i(b)>0$ for some $i\in I$. Then $d(b,b_i)<\frac{d_{b_i}}{4}$ and $$d(b,P)\geq d(b_i,P)-d(b,b_i)>d_{b_i}-\frac{d_{b_i}}{4}=\tfrac34d_{b_i}$$
and
 $$d(b,p_i)\leq d(b,b_i)+d(b_i,p_i)<\frac{d_{b_i}}{4}+2d_{b_i} =\tfrac94d_{b_i}<3d(b,P).$$
Since $p_0\in P$, we have $d(b,P)\leq d(b,p_0)$.
Thus, by $d(b,p_0)<\frac{\delta}{4}$,
$$d(p_i,p_0)\leq d(p_i,b)+d(b,p_0)<3d(b,P)+d(b,p_0)\leq4d(b,p_0)<\delta.$$
Therefore, $|E(x, p_i) - E(x_0, p_0)|<\eps$ by (\ref{eq:d}) and hence,
\begin{align*}
|F(x, b) - E(x_0, p_0)| &= \left| \sum_{i \in I} \lambda_i(b) (E(x, p_i) - E(x_0, p_0)) \right|\\
& \le \sum_{i \in I} \lambda_i(b) |E(x, p_i) - E(x_0, p_0)| \\
&< \sum_{i \in I} \lambda_i(b) \eps = \eps.
\end{align*}
This proves the Claim. Since \((x_0,p_0)\in X\times P\) was arbitrary, \(F\) is continuous at each point of \(X\times P\). Combined with the continuity on \(X\times(B\setminus P)\), we conclude that \(F\) is continuous on \(X\times B\).

\end{proof}

\begin{lemma}\label{lem:universal-function}
For every nonempty countable first-countable space $X$, there are a
Polish abelian topological group $B$ and $F\in C(X\times B,[0,1])$
such that
\begin{equation}\label{eq:all-slices}
          \{F(\,\cdot\,,b):b\in B\}=C(X,[0,1]).
\end{equation}
One may take $B=\R^X\times\mathbb Z^{X\times\N}$ with its product
group topology, where $\mathbb Z$ is discrete.
\end{lemma}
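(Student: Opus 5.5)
The plan is to realize $C(X,[0,1])$ as the family of slices of a continuous function defined on a \emph{closed} subset $P$ of $B=\R^X\times\Z^{X\times\N}$, and then to use Lemma~\ref{lem:parameter-extension} to extend this function to all of $X\times B$.

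Since $X$ is countable, $B$ is a countable product of Polish abelian groups. Hence it is a Polish abelian topological group, and in particular it is metrizable. Since $X$ is first countable, we fix for each $x\in X$ a decreasing countable open local base $W_{x,1}\supseteq W_{x,2}\supseteq\cdots$ at $x$. A point of $B$ is a pair $(v,k)$ with $v\in\R^X$ and $k\in\Z^{X\times\N}$. We regard $v$ as the candidate function and $k$ as a modulus-of-continuity witness. Let $P$ be the set of all $(v,k)\in B$ such that:
\begin{itemize}
\item[(a)] $v(x)\in[0,1]$ for all $x\in X$;
\item[(b)] $k(x,n)\ge1$ for all $x\in X$ and $n\in\N$;
\item[(c)] $|v(y)-v(x)|\le 1/n$ for all $x\in X$, $n\in\N$ and $y\in W_{x,k(x,n)}$.
\end{itemize}

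First we check that $P$ is closed. Conditions (a) and (b) are clearly closed. Suppose (c) fails at $(v,k)$; then there are specific $x,n$ and $y\in W_{x,k(x,n)}$ with $|v(y)-v(x)|>1/n$. On a neighborhood of $(v,k)$ the coordinate $k(x,n)$ is constant, because $\Z$ is discrete, and the strict inequality persists. So the complement of $P$ is open. The zero function with $k\equiv1$ lies in $P$, so $P\neq\emptyset$.

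Next define $E:X\times P\to[0,1]$ by $E(x,(v,k))=v(x)$. We show that $E$ is jointly continuous at $(x_0,(v_0,k_0))$. Fix $\eps>0$, choose $n$ with $1/n<\eps/2$, and put $m=k_0(x_0,n)$. Consider the neighborhood consisting of the points $(y,(v,k))$ with $y\in W_{x_0,m}$, $k(x_0,n)=m$ and $|v(x_0)-v_0(x_0)|<\eps/2$. For such a point, (c) applied to $(v,k)$ gives
\[
|v(y)-v_0(x_0)|\le|v(y)-v(x_0)|+|v(x_0)-v_0(x_0)|<1/n+\eps/2<\eps .
\]
This is the key step. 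The discrete $\Z$-coordinates freeze the modulus witness locally, which yields \emph{joint} continuity rather than mere separate continuity. It is the only real point of the proof.

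Next we identify the slices of $E$. If $(v,k)\in P$, then $v$ is continuous: (c) holds for every $n$, and the sets $W_{x,k(x,n)}$ are neighborhoods of $x$. Conversely, if $g\in C(X,[0,1])$, continuity and the choice of local bases give, for each $x$ and $n$, an index $k(x,n)\ge1$ with $|g(y)-g(x)|\le1/n$ on $W_{x,k(x,n)}$. Then $(g,k)\in P$ and $E(\cdot,(g,k))=g$. Thus the slices of $E$ are exactly $C(X,[0,1])$.

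Finally, Lemma~\ref{lem:parameter-extension} applies with this $X$, the metrizable space $B$ and the nonempty closed set $P$. It yields a continuous $F:X\times B\to[0,1]$ extending $E$. Every slice $F(\cdot,b)$ is continuous and $[0,1]$-valued, so it belongs to $C(X,[0,1])$. The slices over $P$ already exhaust $C(X,[0,1])$. This gives \eqref{eq:all-slices}.
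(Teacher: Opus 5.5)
Your proof is correct and has the same architecture as the paper's. Both use the closed set $P\subseteq\R^X\times\Z^{X\times\N}$ of pairs (function, modulus witness), both prove closedness via discreteness of $\Z$, and both appeal to Lemma~\ref{lem:parameter-extension}.

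The one genuine difference is how joint continuity of $E$ is proved, and there your argument is the one that is actually needed. The paper writes $E=\operatorname{ev}\circ\Phi$ with $\Phi(x,(v,m))=(x,v)$. It then asserts that $\operatorname{ev}:X\times[0,1]^X\to[0,1]$ is continuous ``by the definition of the product topology.'' This fails as soon as $X$ has a non-isolated point $x_0$: take $v$ to be the characteristic function of $\{x_0\}$. Restricting to continuous $v$ does not rescue it either, since evaluation on $X\times C_p(X,[0,1])$ is already discontinuous when $X$ is a convergent sequence.

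In the paper's argument the witness coordinates $m$ are never used. Your observation is that a neighborhood of $(v_0,k_0)$ can freeze the value $k(x_0,n)$. That is precisely what upgrades the pointwise-convergence topology to joint continuity of $E$, so it supplies the missing justification. You also verify explicitly that every $g\in C(X,[0,1])$ admits a witness $k$ with $(g,k)\in P$, which the paper only asserts.
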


\begin{proof}
Choose a countable open local base $(U_n(x))_{n\in\N}$ at each
$x\in X$. Inside the Polish group $B$ in the statement, consider
\begin{equation}\label{eq:closed-parameters}
 \begin{split}
 P=\bigl\{(v,m)\in[0,1]^X\times\N^{X\times\N}:\quad
 &|v(y)-v(x)|\leq 2^{-k}\\[-2pt]
 &\text{for all }x\in X,\ k\in\N,\ y\in U_{m(x,k)}(x)\bigr\}.
 \end{split}
\end{equation}
Now we shall prove that $P$ is closed in $B=\R^X\times\mathbb Z^{X\times\N}$. Let $(v_0,m_0) \in P^c= B \setminus P$. First, if $(v_0,m_0) \notin [0,1]^X \times \N^{X \times \N}$, then it can be separated from $P$ by an obvious basic open set. Thus, we may assume without loss of generality that $(v_0,m_0) \in [0,1]^X \times \N^{X \times \N}$. Since $(v_0,m_0) \notin P$, the defining inequality must fail. Hence, there exist $x_0 \in X$, $k_0 \in \N$, and $y_0 \in X$ such that $y_0 \in U_{m_0(x_0,k_0)}(x_0)$ and
$$
|v_0(y_0) - v_0(x_0)| > 2^{-k_0}.
$$

Set $n_0 = m_0(x_0,k_0) \in \mathbb{N}$. Define the sets $W_1, W_2 \subseteq B$ as follows:
\[
W_1 = \bigl\{(v', m') \in B : m'(x_0,k_0) = n_0\bigr\}=\pi_{x_0,k_0}^{-1}(n_0),
\]
where $\pi_{x_0,k_0} : B \to \mathbb{Z}$ is the projection map, defined by $\pi_{x_0,k_0}(v', m') = m'(x_0,k_0)$.
\[
W_2 = \bigl\{(v', m') \in B : |v'(y_0) - v'(x_0)| > 2^{-k_0}\bigr\}.
\]
Let $W = W_1 \cap W_2$. Clearly, $(v_0,m_0) \in W$.

We claim that $W$ is open in $B$. Since $\{n_0\}$ is open in $\mathbb{Z}$ and the  projection map $\pi_{x_0,k_0}$ is continuous, $W_1$ is open in $B$. For $W_2$: Consider the evaluation map $\phi : B \to \mathbb{R}^2$ defined by $\phi(v', m') = (v'(y_0), v'(x_0))$. Let $\pi_i:\R^2\to \R$ be the $i$-th projection ($i=1,2$) and $\pi_{x}:B\to \R$ be the $x$-th projection ($x=y_0,x_0$). Then one can easily show that $\pi_1\circ \phi = \pi_{y_0}$, $\pi_2\circ \phi = \pi_{x_0}$. Since $\pi_{y_0}$ and $\pi_{x_0}$ are continuous, $\phi$ is continuous. The set $O = \{(a,b) \in \mathbb{R}^2 : |a - b| > 2^{-k_0}\}$ is open in $\mathbb{R}^2$. Therefore, $W_2 = \phi^{-1}(O)$ is open in $B$.
Thus, $W=W_1 \cap W_2$ is open in $B$.

Finally, for any $(v', m') \in W$, we have $m'(x_0,k_0) = n_0$, $y_0 \in U_{n_0}(x_0) = U_{m'(x_0,k_0)}(x_0)$, and $|v'(y_0) - v'(x_0)| > 2^{-k_0}$. This means that $(v', m')$ violates the defining condition of $P$, so $(v', m') \in P^c$. Hence, $W \subseteq P^c$, which means $P^c$ is open. Consequently, $P$ is closed in $B$.

$P$ is nonempty, as witnessed by any constant $v$.

 Define the evaluation function $$E:X\times P\longrightarrow[0,1]$$ by
$E(x,(v,m))=v(x)$ for each $(x,(v,m))\in X\times P$.

To prove $E$ is continuous, consider the map $\Phi: X \times P \to X \times [0,1]^X$ defined by $\Phi(x,(v,m)) = (x,v)$. Since the projections $\pi_X(x,(v,m))=x$ and $\pi_P(x,(v,m))=v$ are continuous, so is $\Phi$. Let $\operatorname{ev}: X \times [0,1]^X \to [0,1]$ be the evaluation map $\operatorname{ev}(x,v)=v(x)$. By the definition of the product topology on $[0,1]^X$, the evaluation map $\operatorname{ev}$ is continuous. Since $E = \operatorname{ev} \circ \Phi$, it follows that $E$ is continuous.

Apply Lemma~\ref{lem:parameter-extension} to extend $E$ to
$F:X\times B\to[0,1]$. Every member of $C(X,[0,1])$ already
occurs as a slice with parameter in $P$, and every slice of $F$
is continuous. This proves~\eqref{eq:all-slices}.
\end{proof}

\begin{lemma}\label{lem:initiality-test}
Let $Y$ be Tychonoff and $a:Y\to L$ continuous, where $L$ is any
topological space. If every $\varphi\in C(Y,[0,1])$ has the form
$\varphi=u_\varphi\circ a$ for some $u_\varphi\in C(L,[0,1])$,
then $a$ is a topological embedding.
\end{lemma}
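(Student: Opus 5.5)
To show that $a$ is a topological embedding, I will check three things in turn: $a$ is injective, $a$ is continuous (given), and the topology of $Y$ coincides with the topology that $a$ pulls back from $L$. Equivalently, I will show that $a$ maps open sets of $Y$ onto relatively open subsets of $a(Y)$.

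\emph{Injectivity.} Take distinct points $y\neq y'$ in $Y$. Since $Y$ is Tychonoff (in particular $T_1$ and completely regular), there is some $\varphi\in C(Y,[0,1])$ with $\varphi(y)=0$ and $\varphi(y')=1$. Writing $\varphi=u_\varphi\circ a$, we get
\[
u_\varphi(a(y))=0\neq 1=u_\varphi(a(y')).
\]
Hence $a(y)\neq a(y')$.

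\emph{Openness onto the image.} Let $O\subseteq Y$ be open and $y\in O$. By complete regularity, choose $\varphi\in C(Y,[0,1])$ with $\varphi(y)=1$ and $\varphi|_{Y\setminus O}=0$, and let $u=u_\varphi\in C(L,[0,1])$ be the function with $\varphi=u\circ a$. Set $W=u^{-1}((0,1])$; this is open in $L$ and contains $a(y)$. I claim
\[
W\cap a(Y)\subseteq a(O).
\]
Indeed, if $a(z)\in W$, then $\varphi(z)=u(a(z))>0$, so $z\in O$. Since $a$ is injective, $a(z)$ is the image of this point of $O$. Thus every point of $a(O)$ has a neighborhood in $a(Y)$ contained in $a(O)$, so $a(O)$ is open in $a(Y)$.

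Combining these, $a:Y\to a(Y)$ is a continuous open bijection, hence a homeomorphism, and $a$ is an embedding. I see no real obstacle here. The only point needing care is that the factorizing function $u_\varphi$ must be defined on all of $L$, not just on $a(Y)$. This matters because the sets $u^{-1}((0,1])$ have to be open in $L$, and it is exactly what the hypothesis provides.
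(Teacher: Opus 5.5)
Your proof is correct and follows the same route as the paper: injectivity comes from $[0,1]$-valued functions separating points, and openness onto the image comes from these functions generating the topology of $Y$. Your complete-regularity step with $W=u_\varphi^{-1}((0,1])$ just spells out the paper's one-line remark that the topology pulled back along $a$ contains that of $Y$, while continuity of $a$ gives the reverse inclusion.
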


\begin{proof}
Continuous $[0,1]$-valued functions separate points of $Y$, so $a$
is injective. They also generate the topology of $Y$. Since each
of them is pulled back from a continuous function on $L$, the
topology induced by $a$ contains the topology of $Y$. Continuity
of $a$ gives the reverse inclusion.
\end{proof}

\begin{theorem}\label{thm:amplification}
Let $G_0$ be a countable first-countable semitopological group whose
continuous real-valued functions separate points, and suppose that
$Y_0=\Tych(G_0)$ is not first-countable. Let $B$ and $F$ be provided
by Lemma~\ref{lem:universal-function} for the underlying space of
$G_0$, and put $H=G_0\times B$. Then:
\begin{enumerate}[label=\textup{(\roman*)},leftmargin=*]
\item $H$ is second-countable, and its regular and Tychonoff
reflections coincide with a Tychonoff semitopological group $Z$
having a countable network. The canonical map $r:H\to Z$ is
bijective.
\item $F$ cannot be written as $h\circ p$, with $p:H\to L$ a
continuous homomorphism to a regular first-countable semitopological
group and $h\in C(L,[0,1])$.
\item Writing $F=\bar F\circ r$, the function $\bar F$ cannot be
written as $h\circ p$ with $p:Z\to L$ a continuous homomorphism
to any first-countable semitopological group and $h\in C(L,[0,1])$.
\end{enumerate}
If $G_0$ is quasitopological or abelian, then so are $H$ and $Z$.
If $G_0$ is semiregular, then so is $H$.
\end{theorem}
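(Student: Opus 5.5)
The plan is to take $Z=(H,\tau_c)$, the topology on $H$ generated by $C(H,\R)$, and to apply Lemma~\ref{lem:weak-topology}. Part (iii) will be proved by embedding $Y_0$ into the target group $L$ via Lemma~\ref{lem:initiality-test}, and part (ii) will be reduced to (iii) through the regular reflection.

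\emph{Part (i).} Since $G_0$ is countable and first-countable, it is second-countable. Since $B$ is Polish, $H=G_0\times B$ is a second-countable semitopological group, and in particular it is Lindel\"of. Continuous real-valued functions on $H$ separate points: pull back functions on $G_0$ along the first projection and functions on $B$ along the second. Lemma~\ref{lem:weak-topology} then says that $Z=(H,\tau_c)$ is both $\Tych(H)$ and $\Reg(H)$, that $r$ is the identity (hence bijective), and that $Z$ is quasitopological whenever $H$ is. Since $\tau_c$ is coarser than the second-countable topology of $H$, a countable base of $H$ is a countable network for $Z$, so $nw(Z)\le\omega$. Products of abelian, or of quasitopological, groups are again of the same kind, which settles the corresponding inheritance for $H$ and hence for $Z$. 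For semiregularity of $H$, I would use that a product of two semiregular spaces is semiregular: products of regular open sets are regular open, and $B$ is metrizable. The same closing clause needs one more check: the final clause of Theorem~\ref{thm:main-example} asserts that $Z$ is abelian, but Lemma~\ref{lem:weak-topology} only transfers continuity of inversion, not commutativity. Since $Z$ has the same underlying group as $H$, namely $G_0\times B$, it is abelian exactly when $H$ is.

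\emph{Part (iii).} Suppose $\bar F=h\circ p$ with $p:Z\to L$ a continuous homomorphism into a first-countable semitopological group and $h\in C(L,[0,1])$. Let $e$ be the identity of $B$, and define $a:Y_0\to L$ by $a(g)=p(g,e)$.

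First I need $a$ to be continuous. The map $j:g\mapsto(g,e)$ is continuous from $Y_0$ into $Z$. Indeed, for $f\in C(H,\R)$ the function $f\circ j$ is continuous on $G_0$, hence on $Y_0=\Tych(G_0)$, and these functions $f$ generate the topology of $Z$. So $a=p\circ j$ is continuous.

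Next, every $\varphi\in C(Y_0,[0,1])$ is continuous on $G_0$, so by \eqref{eq:all-slices} we have $\varphi=F(\cdot,b)$ for some $b\in B$. Then
\[
\varphi(g)=\bar F(g,b)=h\bigl(p(g,e)\,p(e_{G_0},b)\bigr)=\bigl(h\circ R_{c_b}\bigr)(a(g)),
\]
where $c_b=p(e_{G_0},b)$ and $R_{c_b}$ is right translation by $c_b$ in $L$. The function $h\circ R_{c_b}$ is continuous on $L$ because $L$ is semitopological. Lemma~\ref{lem:initiality-test} then makes $a$ a topological embedding of $Y_0$ into $L$. This is impossible, since $L$ is first-countable while $Y_0$ is not.

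\emph{Part (ii).} Suppose $F=h\circ p$ with $L$ regular and first-countable. By the universal property of $\Reg(H)=Z$, the map $p$ factors as $p=p'\circ r$, and $p'$ is again a continuous homomorphism, because $r$ is a bijective homomorphism and $p$ is one. Then $\bar F=h\circ p'$, which contradicts (iii).

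The step I expect to be the main obstacle is the continuity of $a$ on $Y_0$ rather than merely on $G_0$. It is handled by the remark that $\tau_c$-subbasic functions restrict to continuous functions on the slice $G_0\times\{e\}$.
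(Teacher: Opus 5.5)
Your proof is correct. Parts (i) and (iii) follow the paper's argument:
\begin{itemize}
\item Your slice map $j\colon Y_0\to Z$, $g\mapsto(g,e)$, whose continuity you check directly from the initial topology, is the paper's map $s$. The paper obtains $s$ from the universal property of $q$ applied to $r_0=r(\cdot,0)$.
\item The computation $\varphi=(h\circ R_{c_b})\circ a$ followed by Lemma~\ref{lem:initiality-test} is the same as in the paper.
\end{itemize}

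The genuine difference is in (ii). The paper runs the embedding argument a second time. It gets $a\colon Y_0\to L$ from the regular-reflection property of $q\colon G_0\to Y_0$, applied to $p_0=p(\cdot,0)$. You instead derive (ii) formally from (iii). You factor $p$ through $r\colon H\to Z=\Reg(H)$ and note that $p'$ is a homomorphism because $r$ is a surjective homomorphism.

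Your route is shorter, and it shows that (ii) is just (iii) pulled back along $r$. The paper's route for (ii) never needs to identify $\Reg(H)$. It only needs $\Reg(G_0)=Y_0$, which comes from Lindel\"ofness of the countable group $G_0$. In the present setting, Lemma~\ref{lem:weak-topology} supplies both identifications, so either route works.

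One small slip: the abelian clause you check is the closing clause of the present theorem, not of Theorem~\ref{thm:main-example}. Your observation that $Z$ has the same underlying group as $H$ is the right justification.
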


\begin{proof}
A countable space with countable local bases has a countable base.
Thus $H$ is second-countable and Lindel\"of. Its continuous real-valued
functions separate points, using those on $G_0$ and $B$ separately.
From Lemma~\ref{lem:weak-topology} it follows that the regular and Tychonoff
reflections of $H$ coincide, denote by $Z$, with the
initial topology generated by $C(H,\R)$ on the same underlying
group. The images under $r$ of a countable base of $H$ form a
network for $Z$. This proves~\textup{(i)}. Let $q:G_0\to Y_0$ be
the canonical map; by the same lemma it is bijective and $Y_0$
is also $\Reg(G_0)$.

For~\textup{(ii)}, suppose $F=h\circ p$ as stated and write $e$
for the identity of $G_0$ and $0$ for that of $B$. Since $p:G_0\times B\rightarrow L$ is continuous, we can define a continuous
map $p_0:G_0\rightarrow L$ by $p_0(x)= p(x,0)$ for each $x\in G_0$. By the
universal property of $q$, it has the form $a\circ q$ for a
continuous map $a:Y_0\to L$. For every
$\varphi\in C(Y_0,[0,1])$, we have $\varphi\circ q \in C(G_0,[0,1])$. Hence,  by~\eqref{eq:all-slices}, we can choose $b_\varphi\in B$ such that
\[
       F(x,b_\varphi)=\varphi(q(x))
\]
holds for each $x\in G_0$. Put
$c_\varphi=p(e,b_\varphi)$. Since $F=h\circ p$ and $p$ is a homomorphism, we obtain
\begin{align*}
\varphi(q(x))&=F(x,b_\varphi)\\
  & =h\bigl(p((x,0)(e,b_\varphi))\bigr),\quad \text{by~} (x,b_\varphi)=(x,0)(e,b_\varphi)\\
  & =h\bigl(p(x,0)p(e,b_\varphi)\bigr)\\
  & =h\bigl(p_0(x)c_\varphi\bigr)\\
   &=h\bigl(a(q(x))c_\varphi\bigr), \quad \text{by~} p_0=a\circ q.\\
\end{align*}
Since $q$ is onto, $\varphi=u_\varphi\circ a$, where $u_\varphi:L\to [0,1]$ defined by
$u_\varphi(t)=h(tc_\varphi)$ is continuous by separate continuity
in $L$. Thus, we have the following diagram:

\[
\xymatrix@R=2em@C=2.5em{
  G_0 \ar[r]^{p_0} \ar[d]_q &L \ar[ddl]^{u_\varphi} \\
  Y_0 \ar[ur]^a \ar[d]_\varphi & \\
  [0,1] &
}
\]
Lemma~\ref{lem:initiality-test} makes $a$ an embedding
of $Y_0$ into the first-countable space $L$, a contradiction with $Y_0$ not first countable.

For~\textup{(iii)}, the universal property of $r:H\to Z$ gives
$\bar F\in C(Z,[0,1])$ with $F=\bar F\circ r$. Then we can define a continuous
map $r_0:G_0\to Z$ by $r_0(x)= r(x,0)$. The universal property of $q:G_0\to Y_0$ gives a continuous map $s:Y_0\to Z$ satisfying
\[
                  s(q(x))=r_0(x)
\]
holds for each $x\in G_0$.
Suppose $\bar F=h\circ p$ through a first-countable semitopological
group $L$.
\[
\xymatrix@R=2em@C=2.5em{
  H\ar[r]^{F} \ar[d]_r &[0,1]  \\
  Z \ar[ur]^{\overline{F}} \ar[d]_p & \\
  L \ar[uur]^{h}&
}
\]
Put $a=p\circ s$. For $\varphi \in C(Y_0,[0,1])$, we have $\varphi\circ q\in C(G_0,[0,1])$. Hence, by Lemma \ref{lem:universal-function}(\ref{eq:all-slices}), there
$b_\varphi \in B$ such that $F(x,b_\varphi)=\varphi(q(x))$ holds for each $x\in G_0$, put $c_\varphi=p(r(e,b_\varphi))$. Because $r$ and $p$
are homomorphisms, the same calculation gives
\begin{align*}
 \varphi(q(x))&=F(x,b_\varphi)\\
 &=h(p(r(x,b_\varphi)))\\
 &=h(p(r(x,0)r(e,b_\varphi)))\\
  &=h(p(r(x,0))p(r(e,b_\varphi)))\\
 &=h(p(r_0(x))c_\varphi)\\
 &=h(a(q(x))c_\varphi).\\
\end{align*}
Since $q$ is onto, $\varphi=u_\varphi\circ a$, where $u_\varphi:L\to [0,1]$ defined by
$u_\varphi(t)=h(tc_\varphi)$ is continuous by separate continuity
in $L$. Thus, we have the following diagram:
\[
\xymatrix{
  G_0 \ar[r]^{r_0} \ar[d]_r & Z \ar[d]^p \\
  Y_0 \ar[ur]^s \ar[r]^{a=p\circ s} \ar[d]_\varphi & L \ar[dl]^{u_\varphi} \\
  {[0,1]} &
}
\]

Lemma~\ref{lem:initiality-test} again gives an embedding
of $Y_0$ into $L$, a contradiction with $Y_0$ not first countable.

The final assertions about the operations follow from products and
Lemma~\ref{lem:weak-topology}. For semiregularity, take products
of regular open basic sets in $G_0$ and $B$: such a product is
regular open, since closure and interior commute with nonempty
finite products.
\end{proof}

\begin{theorem}\label{thm:main-factorization}
There exist a Hausdorff quasitopological abelian group $H$, which is
second-countable and semiregular, and a function $F\in C(H,[0,1])$
with the following properties.
\begin{enumerate}[label=\textup{(\roman*)},leftmargin=*]
\item The regular and Tychonoff reflections of $H$ coincide with a
Tychonoff quasitopological abelian group $Z$ having a countable network.
The canonical map $r:H\to Z$ is
bijective;
\item $F$ does not factor through any continuous homomorphism from
$H$ to a regular first-countable semitopological group;
\item The function $\bar F =\bar F\circ r\in C(Z,[0,1])$ does not
factor through any continuous homomorphism to a first-countable
semitopological group.
\end{enumerate}
In particular, $H$ is $(\R,\SCSG)$-factorizable but not
$(\R,\FCSG_r)$-factorizable, whereas neither $\Reg(H)$ nor
$\Tych(H)$ is $(\R,\FCSG)$-factorizable.
\end{theorem}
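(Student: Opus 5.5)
We take $G_0=G$, the quasitopological group of (\ref{eq:cones2}), and apply Theorem~\ref{thm:amplification}. By Lemma~\ref{lem:semiregular}, $G$ is countable, second-countable (hence first-countable), Hausdorff, semiregular, quasitopological and abelian. Its continuous real-valued functions separate points, since the two coordinate functions are continuous because $\rho\subseteq\tau$. By Theorem~\ref{thm:main-example} and Proposition~\ref{prop:not-frechet}, $Y_0=\Tych(G)=Y$ is not first-countable. So all hypotheses of Theorem~\ref{thm:amplification} hold. Let $B=\R^G\times\Z^{G\times\N}$ and $F$ be given by Lemma~\ref{lem:universal-function}, and put $H=G\times B$.

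Items (i)--(iii) are then read off from Theorem~\ref{thm:amplification}. It gives second countability of $H$, the coincidence of both reflections with $Z$ (which has a countable network), bijectivity of $r$, and the two non-factorization statements. Its final clause transfers the quasitopological, abelian and semiregular properties from $G_0$ to $H$ and $Z$, because $B$ is an abelian topological group. $H$ is Hausdorff as a product of Hausdorff spaces. $Z$ is Tychonoff by construction.

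For the ``in particular'' part, I would argue as follows. $(\R,\SCSG)$-factorizability of $H$ is immediate: take $p=\mathrm{id}_H$ and $h=f$, since $H$ itself is a second-countable semitopological group. Failure of $(\R,\FCSG_r)$-factorizability follows from (ii) applied to $F$, viewed as a real-valued function; a factorization $F=h\circ p$ with $h$ real-valued can be replaced by $\min(\max(h,0),1)$, which lands in $[0,1]$. Failure of $(\R,\FCSG)$-factorizability for $\Reg(H)=\Tych(H)=Z$ follows from (iii) by the same truncation.

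The only point needing care is confirming that the hypotheses of Theorem~\ref{thm:amplification} match exactly, in particular that ``not first-countable'' for $Y_0$ is really established. I would derive it from the failure of the Fréchet--Urysohn property in Proposition~\ref{prop:not-frechet}, since every first-countable space is Fréchet--Urysohn. Everything else is bookkeeping.
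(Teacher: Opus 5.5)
Your proposal is correct and follows the paper's proof: take $G_0=G$ from Theorem~\ref{thm:main-example}, take $B$ and $F$ from Lemma~\ref{lem:universal-function}, read off (i)--(iii) from Theorem~\ref{thm:amplification}, and use the identity homomorphism for $(\R,\SCSG)$-factorizability. Your extra checks fill in small points the paper leaves implicit: a first-countable space is Fr\'echet--Urysohn, so $Y$ is not first-countable, and a real-valued $h$ can be truncated to $[0,1]$ to match the form of Theorem~\ref{thm:amplification}.
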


\begin{proof}
Let $G$ be the quasitopological group of Theorem~\ref{thm:main-example}. There are a
Polish abelian topological group $B=\R^G\times \Z^{G\times \N}$ and $F\in C(H,[0,1])$ provided
by Lemma~\ref{lem:universal-function} for the underlying space of
$G$, where $H=G\times B$. All the asserted separation
and algebraic properties in Theorem \ref{thm:amplification} now follow. Thus (i)-(iii) follow from (i)-(iii) in Theorem \ref{thm:amplification}, respectively.

Every second-countable
semitopological group is $(\R,\SCSG)$-factorizable, by using its
identity homomorphism for each continuous real-valued function.
The two failures of factorization are witnessed by $F$ and $\bar F$.
\end{proof}

\begin{remark}\label{cor:61-65}
Use $H$ from Theorem~\ref{thm:main-factorization}. Questions~\ref{Q1}, \ref{Q2}, \ref{Q3}, and~\ref{Q4} have negative
answers, even with a second-countable Hausdorff semiregular
quasitopological abelian group as the source. In Questions~\ref{Q1}
and~\ref{Q2} both the regular and the Tychonoff reflection can fail
the asserted factorization property. In Question~\ref{Q4} even regular
first-countable targets are insufficient.
\end{remark}

\begin{remark}\label{rem:PW-counterexample}
The group $H$ from Theorem~\ref{thm:main-factorization} also provides counterexamples to the semitopological
and quasitopological assertions in both~\cite[Theorem~4.2]{PW}
and~\cite[Proposition~4.20]{PW}. Indeed, $H$ is second-countable and
quasitopological, yet it fails factorization even through regular
first-countable semitopological groups. The failed identification
in the proof of the former result was explained in
Remark~\ref{rem:semireg-reflection}.
\end{remark}


\section{Some positive answers in factorizations}\label{sec4}

We now turn to the positive answers to Questions~\ref{Q5} and~\ref{Q6}. 
For a semitopological group $H=(H,\sigma)$, we consider the semitopological group topology $ \sigma^{-1} = \{U^{-1} : U \in  \sigma\}$ on $H$. Clearly, the groups $(H,  \sigma)$ and $(H,  \sigma^{-1})$ are homeomorphic. Consider the family
\[
\mathcal{B} = \{U \cap U^{-1} : e \in U \in  \sigma\},
\]
where $e$ is the identity of $H$. It is easy to verify that $\mathcal{B}$ is a local base at the identity of $H$ for a quasitopological group topology $\lambda$ on $H$ which is finer than both $ \sigma$ and $ \sigma^{-1}$. The group $(H, \lambda)$ is referred to as the \textit{quasitopological group associated to} $G$ and is denoted by $H^*$ (see \cite{Tk2026}). The identity mapping of $H^*$ onto $H$ is a continuous homomorphism. If $H$ is a paratopological group, then $H^*$ is a topological group (see \cite{BR} or \cite[ Section 4.1]{Tk20142}).

\begin{lemma}~{\cite[Lemma~3.1 and
Proposition~3.4]{Tk2026}}\label{lem:symmetrization}
Let $H$ be a semitopological group.
\begin{enumerate}[label=\textup{(\roman*)},leftmargin=*]
\item $H^*$ is a quasitopological group, and the identity
$j:H^*\to H$ is a continuous homomorphism.
\item Every continuous homomorphism $p:G\to H$ from a
quasitopological group has a continuous lift
$p^*:G\to H^*$ with the same underlying homomorphism.
\item If a topological property $\mathcal P$ is invariant under
homeomorphisms and preserved by subspaces and finite products,
then $H\in\mathcal P$ implies $H^*\in\mathcal P$.
\end{enumerate}
\end{lemma}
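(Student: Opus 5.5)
The plan is to work directly from the local base $\mathcal B=\{U\cap U^{-1}: e\in U\in\sigma\}$ of $\lambda$ and to verify the three items in order.

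For (i), I will first check that $\mathcal B$ satisfies the standard conditions for a neighborhood base at $e$ of a semitopological group topology. It is a filter base, since $(U\cap U^{-1})\cap(V\cap V^{-1})=W\cap W^{-1}$ with $W=U\cap V$. For every $g\in H$ and $U\ni e$ we need $V$ with $gVg^{-1}\cap (gVg^{-1})^{-1}\subseteq U\cap U^{-1}$; taking $V=g^{-1}Ug$, which is $\sigma$-open because conjugation is $\sigma$-continuous, we get $g(V\cap V^{-1})g^{-1}=U\cap U^{-1}$. For each $x\in U\cap U^{-1}$ there should be $V$ with $x(V\cap V^{-1})\subseteq U\cap U^{-1}$. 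Choose $V_1$ with $xV_1\subseteq U$ and $V_2$ with $V_2x^{-1}\subseteq U$, using $x\in U$ and $x^{-1}\in U$. Then $x(V\cap V^{-1})\subseteq U\cap U^{-1}$ for $V=V_1\cap V_2$, because $(xv)^{-1}=v^{-1}x^{-1}\in V_2x^{-1}$ when $v^{-1}\in V_2$. These conditions let us define $\lambda$ by declaring $O$ open iff for each $x\in O$ some $xB\subseteq O$ with $B\in\mathcal B$; the conjugation invariance then gives right translations continuous as well. Inversion is continuous because every basic set $B$ is symmetric, and $(xB)^{-1}=Bx^{-1}$, which contains a left translate basic neighborhood of $x^{-1}$ by conjugation invariance. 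Continuity of $j$ follows because each $\sigma$-neighborhood $xU$ contains $x(U\cap U^{-1})$.

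For (ii), let $p:G\to H$ be a continuous homomorphism with $G$ quasitopological. It suffices to check continuity at $e$. Given $U\cap U^{-1}$, we have $p^{-1}(U\cap U^{-1})=p^{-1}(U)\cap (p^{-1}(U))^{-1}$, because $p$ is a homomorphism. This set is open in $G$ since inversion in $G$ is continuous. This is the point where quasitopologicality of $G$ is used.

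For (iii), the key observation is that $x\mapsto (x,x^{-1})$ maps $H^*$ homeomorphically onto the subspace $\Delta=\{(x,x^{-1})\}$ of $(H,\sigma)\times(H,\sigma^{-1})$. Indeed, the preimage of a basic set $xU\times x^{-1}V^{-1}$ is $xU\cap xV$, and the basic $\lambda$-sets $x(U\cap U^{-1})$ arise this way, using the identity $(x^{-1}U^{-1})^{-1}=Ux$ together with conjugation invariance to match left and right translates. Since $(H,\sigma^{-1})$ is homeomorphic to $(H,\sigma)$ via inversion, $H^*$ embeds in $H\times H$, so $\mathcal P$ passes to $H^*$. The main obstacle is this last identification of $\lambda$ with the diagonal subspace topology. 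I would verify it carefully at $e$, where it reads $\{U\cap V^{-1}\}$ versus $\{W\cap W^{-1}\}$, which are mutually cofinal via $W=U\cap V$, and then transfer to an arbitrary point $x$ by translation, since both topologies are translation invariant.
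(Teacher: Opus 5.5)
Your parts (i) and (ii) are fine. The paper gives no proof of this lemma and simply cites Tkachenko. Your verification of the filter-base, translation and conjugation conditions, and your check of continuity of $p^*$ at $e$ via $p^{-1}(U\cap U^{-1})=p^{-1}(U)\cap(p^{-1}(U))^{-1}$, are the standard arguments.

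The gap is in (iii): the map you chose does not embed $H^*$. For $\psi(y)=(y,y^{-1})$ into $(H,\sigma)\times(H,\sigma^{-1})$, the second coordinate $y\mapsto y^{-1}$ is itself a homeomorphism of $(H,\sigma)$ onto $(H,\sigma^{-1})$, since the preimage of $V^{-1}$ is $V$. So $\psi$ is the graph of a homeomorphism, and it embeds $(H,\sigma)$, not $(H,\lambda)$.

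Concretely, the preimage of $xU\times x^{-1}V^{-1}$ is $xU\cap Vx$, not $xU\cap xV$, and this set is $\sigma$-open. At $e$ the preimages are $U\cap V$, not $U\cap V^{-1}$ as you later assert. Hence the basic sets $x(U\cap U^{-1})$ cannot arise this way whenever $\lambda\neq\sigma$. For example, for the Sorgenfrey line $H^*$ is discrete, while the image of $\psi$ is again a copy of the Sorgenfrey line. Composing with $\mathrm{id}\times$inversion, as you propose, only produces the diagonal copy of $(H,\sigma)$ in $H\times H$.

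The fix is to use the diagonal $y\mapsto(y,y)$ into $(H,\sigma)\times(H,\sigma^{-1})$, or equivalently $y\mapsto(y,y^{-1})$ into $(H,\sigma)\times(H,\sigma)$.
\begin{itemize}
\item The preimage of $U\times V^{-1}$, respectively $U\times V$, is then $U\cap V^{-1}$, so the induced topology is $\sigma\vee\sigma^{-1}$.
\item Its neighborhood filter at $e$ has base $\{U\cap V^{-1}\}$, which is mutually cofinal with $\mathcal B$ via $W=U\cap V$. This is exactly the check you describe at $e$.
\item Left translations are homeomorphisms for both $\sigma$ and $\sigma^{-1}$, hence for $\sigma\vee\sigma^{-1}$, as well as for $\lambda$. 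So $\sigma\vee\sigma^{-1}=\lambda$.
\end{itemize}
Alternatively, note directly that $x(U\cap U^{-1})=xU\cap(Ux^{-1})^{-1}$. With the corrected map, $H^*$ is homeomorphic to a subspace of $H\times H$, and $\mathcal P$ passes to $H^*$ exactly as you intended.
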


Write $\SG(\mathcal P)$ and $\QG(\mathcal P)$ for the semitopological
and quasitopological groups whose underlying spaces have $\mathcal P$.

\begin{theorem}\label{thm:transfer-symmetry}
Let $\mathcal X$ be any class of topological spaces, and let
$\mathcal P$ satisfy Lemma~\ref{lem:symmetrization}\textup{(iii)}.
For every quasitopological group $G$, the following are equivalent:
\begin{enumerate}[label=\textup{(\roman*)},leftmargin=*]
\item $G$ is $(\mathcal X,\SG(\mathcal P))$-factorizable;
\item $G$ is $(\mathcal X,\QG(\mathcal P))$-factorizable.
\end{enumerate}
Moreover, each individual factorization in~\textup{(i)} can be
converted into one in~\textup{(ii)} without changing the underlying
homomorphism, its kernel, or the underlying map to the target space.
\end{theorem}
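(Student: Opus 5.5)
The implication (ii)$\Rightarrow$(i) is immediate, because every quasitopological group is a semitopological group: $\QG(\mathcal P)\subseteq\SG(\mathcal P)$. So the real work is (i)$\Rightarrow$(ii), together with the ``moreover'' clause. The plan is to take an arbitrary factorization witnessing (i) and lift it through the symmetrization of Lemma~\ref{lem:symmetrization}.

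Fix a continuous map $f:G\to X$ with $X\in\mathcal X$. By (i) there are:
\begin{itemize}
\item a continuous surjective homomorphism $p:G\to H$ with $H\in\SG(\mathcal P)$;
\item a continuous map $h:H\to X$ with $f=h\circ p$.
\end{itemize}
Form the quasitopological group $H^*$ and the identity $j:H^*\to H$. These objects and their properties come from the steps below.
\begin{enumerate}[label=\textup{(\arabic*)},leftmargin=*]
\item By Lemma~\ref{lem:symmetrization}(i), $H^*$ is a quasitopological group and $j$ is a continuous homomorphism.
\item By Lemma~\ref{lem:symmetrization}(iii), applied to $\mathcal P$, we get $H^*\in\QG(\mathcal P)$. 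The property passes because $H^*$ is homeomorphic to the diagonal subspace $\{(x,x)\}$ of $(H,\sigma)\times(H,\sigma^{-1})$, and $(H,\sigma^{-1})$ is homeomorphic to $H$ via inversion.
\item Since $G$ is quasitopological, Lemma~\ref{lem:symmetrization}(ii) gives a continuous lift $p^*:G\to H^*$ with the same underlying homomorphism as $p$. Hence $p^*$ is surjective, $j\circ p^*=p$, and $\ker p^*=\ker p$.
\item Put $h^*=h\circ j:H^*\to X$. It is continuous, and its underlying map is that of $h$.
\end{enumerate}
Then $h^*\circ p^*=h\circ j\circ p^*=h\circ p=f$. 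This is the required factorization through a member of $\QG(\mathcal P)$, and it leaves unchanged the homomorphism, its kernel, and the underlying map to $X$, which is exactly the ``moreover'' clause.

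The only point that needs care is the continuity of the lift $p^*$. If one wants to verify it rather than cite Lemma~\ref{lem:symmetrization}(ii), the argument goes as follows.
\begin{itemize}
\item A basic neighborhood of $e$ in $H^*$ has the form $U\cap U^{-1}$ with $e\in U\in\sigma$.
\item Its preimage under $p$ is $p^{-1}(U)\cap p^{-1}(U^{-1})$.
\item Here $p^{-1}(U^{-1})=(p^{-1}(U))^{-1}$, and this set is open because inversion in $G$ is continuous.
\end{itemize}
So the preimage is an open neighborhood of $e$ in $G$. Continuity at $e$ then spreads to all of $G$, since translations are homeomorphisms in both $G$ and $H^*$ and $p$ is a homomorphism.

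This is precisely where the hypothesis that $G$ is quasitopological is used. I expect no other obstacle.
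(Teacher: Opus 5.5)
Your proposal is correct and is essentially the paper's proof: lift $p$ to $p^*:G\to H^*$ via Lemma~\ref{lem:symmetrization}(ii), use (iii) to get $H^*\in\QG(\mathcal P)$, and factor $f=(h\circ j)\circ p^*$. Your extra checks are accurate and only unpack what the paper cites from Tkachenko: the embedding of $H^*$ as the diagonal of $(H,\sigma)\times(H,\sigma^{-1})$, and the direct verification that $p^{-1}(U^{-1})=(p^{-1}(U))^{-1}$ is open.
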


\begin{proof}
Only \textup{(i)}$\Rightarrow$\textup{(ii)} needs proof. For a
continuous $f:G\to X$, $X\in\mathcal X$, take
$f=h\circ p$ with $p:G\to H$ continuous and onto and
$H\in\SG(\mathcal P)$. By Lemma~\ref{lem:symmetrization},
$p^*:G\to H^*$ is continuous and onto and
$H^*\in\QG(\mathcal P)$. Since the identity $j:H^*\to H$ is continuous,
\[
                      f=(h\circ j)\circ p^*
\]
is the required factorization.
\[
\xymatrix{
  & G \ar[r]^f \ar[d]^p \ar[dl]_{p^*} & X \\
  H^* \ar[r]^j & H \ar[ur]_h &
}
\]
\end{proof}

\begin{corollary}\label{cor:66-67}
For every quasitopological group $G$, without any separation
assumption, one has
\begin{align*}
 G\text{ is }(\R,\FCSG)\text{-factorizable}
 &\ \Longleftrightarrow\
 G\text{ is }(\R,\FCQG)\text{-factorizable},\\
 G\text{ is }(\R,\SCSG)\text{-factorizable}
 &\ \Longleftrightarrow\
 G\text{ is }(\R,\SCQG)\text{-factorizable}.
\end{align*}
Thus Problems~6.6 and~6.7 of~\cite{Tk2026} have affirmative answers.
\end{corollary}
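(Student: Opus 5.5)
The plan is to obtain Corollary~\ref{cor:66-67} as two instances of Theorem~\ref{thm:transfer-symmetry}, taking $\mathcal X=\{\R\}$ in both. For the first equivalence we take $\mathcal P$ to be first countability. For the second we take $\mathcal P$ to be second countability. By definition $\FCSG=\SG(\mathcal P)$ and $\FCQG=\QG(\mathcal P)$ for the first choice, and likewise $\SCSG=\SG(\mathcal P)$ and $\SCQG=\QG(\mathcal P)$ for the second. So the only thing to verify is that each of these two properties satisfies the hypothesis of Lemma~\ref{lem:symmetrization}(iii).

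That verification is routine. Both first and second countability are topological properties, so they are invariant under homeomorphisms. A subspace of a first-countable space is first countable, because the traces of a countable local base form a countable local base. A subspace of a second-countable space is second countable, because the traces of a countable base form a countable base. For a finite product $X_1\times\dots\times X_k$, products of members of countable local bases (respectively, countable bases) give a countable local base (respectively, a countable base).

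Once these facts are in hand, Lemma~\ref{lem:symmetrization}(iii) yields that $H^*$ is first countable (respectively, second countable) whenever $H$ is. Theorem~\ref{thm:transfer-symmetry} then gives (i)$\Leftrightarrow$(ii) for every quasitopological group $G$, which is exactly the pair of displayed equivalences.

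No separation axiom enters anywhere. Neither Lemma~\ref{lem:symmetrization} nor Theorem~\ref{thm:transfer-symmetry} assumes one, and the properties being transferred involve none. This gives the affirmative answers to Questions~\ref{Q5} and~\ref{Q6}, including the case where $G$ is regular.

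The only step that needs any care is already packaged inside Lemma~\ref{lem:symmetrization}(iii), namely that $H^*$ embeds into $H\times(H,\sigma^{-1})$ via the diagonal map. Since that lemma is quoted from \cite{Tk2026}, I do not expect a real obstacle. The main point is to check that the hypotheses on $\mathcal P$ in Lemma~\ref{lem:symmetrization}(iii) are exactly the closure properties verified above, which they are.
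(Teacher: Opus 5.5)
Your proposal is correct and follows the paper's proof: the paper likewise notes that first and second countability are homeomorphism-invariant, hereditary and finitely productive, and then applies Theorem~\ref{thm:transfer-symmetry} with $\mathcal X=\{\R\}$. Your side remark is also accurate: Lemma~\ref{lem:symmetrization}(iii) rests on the diagonal embedding of $H^*$ into $(H,\sigma)\times(H,\sigma^{-1})$, where $(H,\sigma^{-1})$ is homeomorphic to $H$ via inversion.
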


\begin{proof}
First countability and second countability are hereditary and
finitely productive topological properties. Apply
Theorem~\ref{thm:transfer-symmetry}.
\end{proof}

\begin{corollary}\label{cor:cardinal}
The same equivalence holds with first or second countability replaced
by any of the following target conditions: metrizability,
$\chi(H)\leq\kappa$, $w(H)\leq\kappa$, or $\nw(H)\leq\kappa$,
where $\kappa$ is an infinite cardinal. A Hausdorff, regular, or
Tychonoff requirement already imposed on the targets can be retained.
These statements hold for arbitrary $\mathcal X$ in place of $\R$.
\end{corollary}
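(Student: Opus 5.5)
The statement reduces to Theorem~\ref{thm:transfer-symmetry}, and the main work is checking, for each listed target condition $\mathcal P$, the hypothesis of Lemma~\ref{lem:symmetrization}\textup{(iii)}: invariance under homeomorphisms, heredity to subspaces, and preservation by finite products. Metrizability, $\chi\le\kappa$, $w\le\kappa$ and $\nw\le\kappa$ (with $\kappa$ infinite) are all topological invariants. They are hereditary: a subspace of a metrizable space is metrizable, and traces of a local base, base, or network on a subspace again form a local base, base, or network of no larger cardinality. They are finitely productive: a finite product of metrizable spaces is metrizable, and products of members of bases, local bases, or networks give bases, local bases, or networks of cardinality at most $\kappa\cdot\kappa=\kappa$. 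This is why $\kappa$ must be infinite. Theorem~\ref{thm:transfer-symmetry}, applied with this $\mathcal P$ and with $\mathcal X$ arbitrary, then gives the equivalence directly. This also covers the final sentence about arbitrary $\mathcal X$, since that theorem places no restriction on $\mathcal X$.

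For the separation requirements, we take $\mathcal P'=\mathcal P\wedge T$, where $T$ is one of Hausdorff, regular ($T_1$ and regular), or Tychonoff. Each of these separation properties is a topological invariant, hereditary, and finitely productive; these are standard facts. A conjunction of properties each satisfying Lemma~\ref{lem:symmetrization}\textup{(iii)} again satisfies it, so the conjunction also satisfies the hypothesis. Applying Theorem~\ref{thm:transfer-symmetry} to $\mathcal P'$ shows that the extra separation requirement is retained.

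The only point that needs real care is the mechanism inside Lemma~\ref{lem:symmetrization}\textup{(iii)}. There, $H^*$ is identified with the diagonal subspace $\{(x,x^{-1})\}$ of $(H,\sigma)\times(H,\sigma^{-1})$, and $(H,\sigma^{-1})$ is homeomorphic to $(H,\sigma)$ via inversion. Since we are told to assume Lemma~\ref{lem:symmetrization}, I would cite it rather than reprove it. The remaining verifications are routine, so I expect no genuine obstacle.
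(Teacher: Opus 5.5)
Your proposal is correct and is essentially the paper's own argument. Like the paper, you check that each listed property, and its conjunction with Hausdorff, regular, or Tychonoff, is a topological invariant that is hereditary and finitely productive, so that Lemma~\ref{lem:symmetrization}\textup{(iii)} applies, and then you invoke Theorem~\ref{thm:transfer-symmetry}, which places no restriction on $\mathcal X$.

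There is one harmless slip in your side remark. $H^*$ corresponds to the diagonal $\{(x,x)\}$ of $(H,\sigma)\times(H,\sigma^{-1})$, or equivalently to $\{(x,x^{-1})\}\subseteq(H,\sigma)\times(H,\sigma)$. The set $\{(x,x^{-1})\}\subseteq(H,\sigma)\times(H,\sigma^{-1})$ only recovers $\sigma$. Since you cite the lemma rather than reprove it, this does not affect the proof.
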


\begin{proof}
Each listed property, and its conjunction with any listed separation
property, satisfies Lemma~\ref{lem:symmetrization}\textup{(iii)}.
\end{proof}


Proposition \ref{prop:countable-orbits} records a direct
factorization argument for countable groups. It also explains why
the counterexample to countability preservation is compatible with
strong factorization properties.

\begin{proposition}\label{prop:countable-orbits}
Let $H$ be a countable semitopological group and
$\mathcal F\subseteq C(H,\R)$ a countable family. There exist a
second-countable Tychonoff semitopological group $L$ and a continuous
surjective homomorphism $p:H\to L$ such that every member of
$\mathcal F$ factors continuously through $p$. If $H$ is
quasitopological, $L$ can be chosen quasitopological.
\end{proposition}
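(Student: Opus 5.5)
The plan is to enlarge $\mathcal F$ to a countable family that is closed under translations, use that family to define a pseudometrizable-type initial topology, and pass to the quotient by the subgroup of points it cannot distinguish. Let
\[
\mathcal F'=\{\,f\circ L_g\circ R_h:\ f\in\mathcal F,\ g,h\in H\,\},
\]
where $L_g,R_h$ are the left and right translations. Since $H$ is semitopological, every member of $\mathcal F'$ is continuous. Since $H$ and $\mathcal F$ are countable, $\mathcal F'$ is countable. In the quasitopological case I would also add the functions $f\circ L_g\circ R_h\circ\iota$, where $\iota$ is inversion; these are continuous, and the enlarged family stays countable. By construction, $\mathcal F'$ is invariant under precomposition with $L_a$ and $R_b$ for all $a,b\in H$, and, in the quasitopological case, under precomposition with $\iota$ as well. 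To check the latter, note that $\varphi(g(axb)^{-1}h)=\varphi((gb^{-1})x^{-1}(a^{-1}h))$, so the family is stable under the relevant compositions.

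Next I define $x\sim y$ if and only if $\varphi(x)=\varphi(y)$ for all $\varphi\in\mathcal F'$, and put $N=\{x:x\sim e\}$. The key algebraic step, and the one I expect to need the most care, is to show that $N$ is a normal subgroup and that $\sim$ is exactly the coset relation modulo $N$. Translation invariance gives this directly.
\begin{itemize}
\item If $x\sim y$, then $ax b\sim ayb$, because $\varphi\circ L_a\circ R_b\in\mathcal F'$.
\item Taking $a=e$ and $b=y^{-1}$, the relation $x\sim y$ yields $xy^{-1}\in N$.
\item Conversely, if $xy^{-1}\in N$, then $x\sim y$ follows by right-translating by $y$.
\end{itemize}
Closure of $N$ under products and inverses, and under conjugation by any $g\in H$, follows in the same way. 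Put $L=H/N$ and let $p:H\to L$ be the quotient homomorphism. Each $\varphi\in\mathcal F'$ factors as $\varphi=\bar\varphi\circ p$ for a unique function $\bar\varphi:L\to\R$.

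I then give $L$ the initial topology generated by $\{\bar\varphi:\varphi\in\mathcal F'\}$. The map $e:L\to\R^{\mathcal F'}$, $e(p(x))=(\varphi(x))_{\varphi}$, is injective by the definition of $\sim$, and it is an embedding by the definition of the topology. Hence $L$ is a subspace of a countable power of $\R$, and so $L$ is second-countable and Tychonoff. The map $p$ is continuous because each $\bar\varphi\circ p=\varphi$ is continuous.

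To see that $L$ is semitopological, observe that $\bar\varphi\circ L_{p(a)}=\overline{\varphi\circ L_a}$, which is again one of the generating functions; the same holds for right translations. Inversion on $L$ is continuous in the quasitopological case for the same reason, as in the proof of Lemma~\ref{lem:weak-topology}. Finally, each $f\in\mathcal F\subseteq\mathcal F'$ factors as $f=\bar f\circ p$, and $\bar f$ is continuous on $L$ by the definition of the topology.
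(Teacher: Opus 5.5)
Your proof is correct. Apart from the quasitopological clause, it is the paper's argument: the same countable translation-closed family $x\mapsto f(gxh)$, the same indistinguishability relation whose class of $e$ is a normal subgroup $N$, and the same initial topology on $H/N$ embedding it into a countable power of $\R$.

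The difference lies in how you obtain continuity of inversion. The paper leaves the family unchanged and replaces $L$ by its symmetrization $L^*$. It relies on Lemma~\ref{lem:symmetrization} (Tkachenko's results) both to lift $p$ continuously and to know that $L^*$ is still second-countable and Tychonoff. You instead adjoin the functions $x\mapsto f(gx^{-1}h)$. Then inversion on $L$ pulls generators back to generators, and the separation and countability properties follow directly from the embedding.

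The two constructions in fact produce the same topological group. Because $N$ is normal, $x\sim y$ already forces $x^{-1}\sim y^{-1}$, so the added functions do not change $N$. The topology they generate together with the old ones is $\tau_{\overline{\mathcal A}}\vee\tau_{\overline{\mathcal A}}^{-1}$, which is precisely the topology of $L^*$. Thus your route is a self-contained, explicit description of the paper's $L^*$. The paper's route has the economy of reusing a general lemma that it needs anyway for Theorem~\ref{thm:transfer-symmetry}.
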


\begin{proof}
For each function $f:H\to \R$ and $a,b\in H$, define
\begin{equation}\label{ivan}
f_{ab}:H\to \R
\end{equation} by $f_{ab}(x)=f(axb)$ for each $x\in H$.
Since $\mathcal F$ and $H$ are countable, we can obtain a countable family
\[
 \mathcal A=\{f_{ab}: f\in\mathcal F,\ a,b\in H\}.
\]
 Since $H$ is semitopological group and $f$ is continuous, this family consists of continuous functions such that the function $u_{ab}$ is contained in $\mathcal A$ for each $u\in \mathcal A$ and each $a,b\in H$, where $u_{ab}:H\to \R$
 is defined by (\ref{ivan}). Define relation on $H$ by $x\sim y$ when
$u(x)=u(y)$ for every $u\in\mathcal A$. Then translation invariance makes this an equivalence relation. Also, this relation is invariant
under both left and right multiplication. Indeed, if $x\sim y$, then $u(x)=u(y)$ holds for each $u\in\mathcal A$. Hence, $u_{ab}(x)=u_{ab}(y)$ holds for each $a,b\in H$, which implies that $u(axb)=u(ayb)$ by (\ref{ivan}), so $axb\sim ayb$. Thus, the equivalence relation class $N$ containing
the identity $e$ is a normal subgroup: if $n,m\sim e$, then
 translating $m\sim e$ by $n$ gives $nm\sim n\sim e$; translating $n\sim e$ by $n^{-1}$ gives
$n^{-1}\sim e$; and translating $n\sim e$ by $a,a^{-1}$ gives $ana^{-1}\sim e$ for each $a\in H$, which implies that $N$ is a normal subgroup.
Also, we can obtain that the equivalence classes are the cosets of $N$.

Let $p:H\to H/N $ be the natural group homomorphism. Then each $u\in\mathcal A$ induces a function $\overline{u}:H/N\to\R$ such that $u=\overline{u}\circ p$, since $u$ is constant on $xN$ for each $x\in H$.
Put $\overline{\mathcal A}=\{\overline u: u\in\mathcal A\}$. Define the map
\[
 \iota:H/N\longrightarrow\R^{\overline{\mathcal A}}
 \]
 by $\iota(x)=(\overline u(x))_{\overline{u}\in\overline{\mathcal A}}$ for each $x\in H/N$.
Then $ \iota$ is injective. Indeed, if $x_1\neq x_2$ for $x_1,x_2\in H/N$, then there are $h_1,h_2\in H$ such that $x_1=p(h_1), x_2=p(h_2)$ and $h_1\notin h_2N$, which implies that
there is $u\in \mathcal A$ such that $u(h_1)\neq u(h_2)$. Then $\overline{u}(x_1)\neq \overline{u}(x_2)$ by $u=\overline{ u}\circ p$. Hence, $\iota(x_1)\neq \iota(x_2)$. This proves that $ \iota$ is injective. Give $L=H/N$ the topology $\tau_{\overline{\mathcal A}}$ generated
 by $\overline{\mathcal A}$. i.e., the family $\beta=\{\overline{ u}^{-1}(U): \overline {u}\in \overline{\mathcal A}, U \text{~open in ~}\R\}$ is a subbase of the topology $\tau_{\overline{\mathcal A}}$. Then one can easily show that  $\iota:(L, \tau_{\overline{\mathcal A}})\to \R^{\overline{\mathcal A}}$ is a topological embedding. Hence, $(L, \tau_{\overline{\mathcal A}})$ This is a second-countable Tychonoff space.
Also, the algebraic quotient homomorphism $p:H\to (L, \tau_{\overline{\mathcal A}})$ is continuous,
since each coordinate of $\iota\circ p$ belongs to $\mathcal A$.

Next, we shall prove that $(L, \tau_{\overline{\mathcal A}})$ is a semitopological group when so is $H$. For each $a\in L$, denote the right translation on $L$ by $\rho_a$. Take $x\in H$ such that $a=p(x)$ and denote the right translation on $H$ by $\rho_x$. Take any element $\bar u^{-1}(U)$ of the subbase $\beta$ of the topology $\tau_{\overline{\mathcal A}}$. Then, by the following commutative diagram,
\[
\xymatrix{
H \ar[r]^{\rho_x} \ar[d]^p & H \ar[r]^{u} \ar[d]^p & \R \\
L \ar[r]^{\rho_a} & L \ar[ru]_{\overline {u}} &
}
\]
For any subbase element \(\overline{u}^{-1}(U)\), we obtain
\begin{align*}
 \rho_a^{-1}(\bar u^{-1}(U))&=p(\rho_x^{-1}(u^{-1}(U)))\\
 &=p((u\circ \rho_x)^{-1}(U))\\
  &=p(u_{ex}^{-1}(U)) \qquad \text{by~} u_{ex}=\overline{u_{ex}}\circ p\\
  &=\overline{u_{ex}}^{-1}(U)\in \beta  \qquad \text{by~} \overline{u_{ex}}\in \overline{\mathcal A},
\end{align*}
which implies that $\rho_a:(L,\tau_{\bar{\mathcal A}})\to (L,\tau_{\bar{\mathcal A}})$ is continuous for each $a\in L$. Similarly, using $u_{xe}$ we can show that every left translation on $L$ is continuous. Thus, $(L,\tau_{\bar{\mathcal A}})$ is a semitopological group.

If $H$ is quasitopological, replace $L$ by $L^*$ and use
Lemma~\ref{lem:symmetrization}; the target remains second-countable
and Tychonoff.
\end{proof}

\begin{theorem}\label{cor:countable}
Every countable semitopological group is
$(\R,\SCSG_t)$-factorizable. Every countable quasitopological group
is $(\R,\SCQG_t)$-factorizable. The same assertions hold for
continuous maps to arbitrary separable metrizable spaces.
\end{theorem}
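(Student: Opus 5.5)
The plan is to reduce everything to Proposition~\ref{prop:countable-orbits} applied to finite or countable families of real-valued functions. Let $H$ be a countable semitopological group and $f\in C(H,\R)$. Apply Proposition~\ref{prop:countable-orbits} to $\mathcal F=\{f\}$. This yields a second-countable Tychonoff semitopological group $L$ and a continuous surjective homomorphism $p:H\to L$ with $f=\bar f\circ p$ for a continuous $\bar f:L\to\R$. This is precisely $(\R,\SCSG_t)$-factorizability. If $H$ is quasitopological, the last sentence of Proposition~\ref{prop:countable-orbits} lets us take $L$ quasitopological, so $H$ is $(\R,\SCQG_t)$-factorizable. (Equivalently, Theorem~\ref{thm:transfer-symmetry} together with Corollary~\ref{cor:cardinal} converts the first factorization into the second, since second countability combined with the Tychonoff property satisfies Lemma~\ref{lem:symmetrization}(iii).)

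For a continuous map $g:H\to M$ into a separable metrizable space $M$, we embed $M$ topologically into $\R^{\N}$, say via $e:M\to\R^{\N}$; the Hilbert cube also works. Let $\pi_n$ be the coordinate projections and put $\mathcal F=\{\pi_n\circ e\circ g:n\in\N\}$, which is a countable subfamily of $C(H,\R)$. Proposition~\ref{prop:countable-orbits} gives $p:H\to L$ as before, with continuous $h_n:L\to\R$ satisfying $\pi_n\circ e\circ g=h_n\circ p$. The diagonal map $h=(h_n)_n:L\to\R^{\N}$ is continuous and satisfies $h\circ p=e\circ g$. Because $p$ is onto, $h(L)=e(g(H))\subseteq e(M)$. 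Hence $e^{-1}\circ h:L\to M$ is well defined and continuous, since $e$ is an embedding. This gives $g=(e^{-1}\circ h)\circ p$, and in the quasitopological case $L$ is again chosen quasitopological.

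The only step requiring care is the last one: we must check that the factoring map actually lands in $M$ and is continuous into $M$, not merely into $\R^{\N}$. Surjectivity of $p$ and the embedding property of $e$ handle this. Everything else is a direct invocation of Proposition~\ref{prop:countable-orbits}, which is where countability of $H$ is essential, since it keeps the translated family $\mathcal A$ countable.
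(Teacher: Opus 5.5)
Your proposal is correct and follows the paper's proof essentially verbatim. You apply Proposition~\ref{prop:countable-orbits} to a single function, using its quasitopological clause for the second assertion. For a separable metrizable target, you embed it into $\R^{\N}$, factor the countably many coordinate functions through one common homomorphism $p$, and use that $p$ is onto to see that the induced map lands continuously in the embedded copy.
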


\begin{proof}
For a single real-valued function use
Proposition~\ref{prop:countable-orbits}. For a continuous map
$f:H\to X$ with $X$ separable metrizable, embed $X$ into
$\R^\N$ and apply the proposition simultaneously to its coordinate
functions. The induced map on the common homomorphic image is
continuous into $\R^\N$ and takes values in the embedded copy of
$X$, as the homomorphism is onto.
\end{proof}

\begin{corollary}
Let the quasitopological group $G$ be defined by (\ref{eq:cones2}). Then $G$, $\Reg(G)$, and $\Tych(G)$ are all
$(\R,\SCQG_t)$-factorizable.
\end{corollary}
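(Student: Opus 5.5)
The plan is to apply Theorem~\ref{cor:countable} to each of the three groups; to do so, each must be shown to be countable and, for $G$, quasitopological. For $G$ this is immediate. By (\ref{eq:cones2}) and Lemma~\ref{lem:semiregular}, $G$ is a countable quasitopological group, so Theorem~\ref{cor:countable} gives at once that $G$ is $(\R,\SCQG_t)$-factorizable.

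For the reflections, recall from (\ref{eq:Y-E}) and Lemma~\ref{lem:weak-topology} (using that $G$ is countable, hence Lindel\"of, and that its continuous real-valued functions separate points, since $\rho\subseteq\tau$) that $\Reg(G)=\Tych(G)=Y=(E,\tau_c)$. Lemma~\ref{lem:weak-topology} also shows that $Y$ is a quasitopological group, because $G$ is. The underlying set of $Y$ is $E=\Q^2$, which is countable. Therefore Theorem~\ref{cor:countable} applies to $Y$ as well, and $Y$ is $(\R,\SCQG_t)$-factorizable. Since the factorization properties concern continuous functions and continuous surjective homomorphisms, they are invariant under topological isomorphism. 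By Theorem~\ref{thm:main-example}(ii), the reflections are canonically isomorphic to $Y$, so both $\Reg(G)$ and $\Tych(G)$ inherit the property.

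The only point needing care is that Theorem~\ref{cor:countable} is stated without separation axioms, so the lack of first countability of $Y$ (Proposition~\ref{prop:not-frechet}) is irrelevant: countability of the underlying group is all that is used. Inside Proposition~\ref{prop:countable-orbits}, the quasitopological version passes to $L^*$ via Lemma~\ref{lem:symmetrization}(iii), and second countability and the Tychonoff property are preserved there. No real obstacle is expected; the step to state explicitly is that $Y$ is genuinely quasitopological and countable, which both follow from Lemma~\ref{lem:weak-topology} and the fact that the canonical map is the identity on $E$.
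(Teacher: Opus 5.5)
Your proposal is correct and matches the paper's argument: both $G$ and its common reflection $Y=\Reg(G)=\Tych(G)$ are countable quasitopological groups, so Theorem~\ref{cor:countable} applies directly to each. Your additional checks (that $G$ is Lindel\"of so Lemma~\ref{lem:weak-topology} applies, and that the factorization property is invariant under topological isomorphism) simply spell out steps the paper leaves implicit.
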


\begin{proof}
Both $G$ and its common reflection $Y$ are countable
quasitopological groups, so it follows from Theorem~\ref{cor:countable}.
\end{proof}

\section*{References}


\def\cprime{$'$}

\end{document}